\providecommand{\R}{\mathbb{R}}
\newcommand{\e}{\varepsilon}
\newcommand{\step}[1]{\medskip\noindent\textbf{Step #1. }}
\newcommand{\substep}[1]{\medskip\noindent\textit{Substep #1. }}
\newcommand{\ignore}[1]{}
\newtheorem{definition}{Definition}
\newtheorem{theorem}{Theorem}
\newtheorem{remark}{Remark}
\newtheorem{lemma}{Lemma}
\newtheorem{corollary}{Corollary}
\newtheorem{assumption}[theorem]{Assumption}
\author[P.~Bella]{Peter Bella}
\address{TU Dortmund\\ Fakult\"at f\"ur Mathematik\\ Lehrstuhl I\\ Vogelpothsweg 87\\44227 Dortmund, Germany.}
\email{peter.bella@math.tu-dortmund.de}
\author[M. Sch\"affner]{Mathias Sch\"affner}
\address{Mathematisches Institut 
 Universit\"at Leipzig\\
 Augustusplatz 10\\ 04103 Leipzig, Germany.}
\email{schaeffner@math.uni-leipzig.de}
\title[Regularity for integral functionals with $(p,q)$-growth]{On the regularity of minimizers for scalar integral functionals with $(p,q)$-growth} 
\begin{document}
\maketitle


\begin{abstract}
We revisit the question of regularity for minimizers of  scalar autonomous integral functionals with so-called $(p,q)$-growth. In particular, we establish Lipschitz regularity under the condition $\frac{q}p<1+\frac{2}{n-1}$ for $n\geq3$ which improves a classical result due to Marcellini~[JDE'91].
\end{abstract}

\section{Introduction and main results}

In this note, we consider the problem of regularity for local minimizers of 
\begin{equation}\label{eq:int}
\mathcal F[u]:=\int_\Omega f(\nabla u)\,dx,
\end{equation}
where $\Omega\subset\R^n$, $n\geq2$, is a bounded domain and $f:\R^n\to\R$ is a sufficiently smooth integrand satisfying $(p,q)$-growth of the form 
\begin{assumption}\label{ass:1} There exist $0<m\leq M<\infty$ such that
\begin{equation}\label{ass}
\begin{cases}
z\mapsto f(z)\,\mbox{is $C^2$,}&\\
m|z|^p\leq f(z)\leq M(1+|z|^q),&\\
m(1+|z|^2)^{\frac{p-2}{2}}|\lambda|^2\leq \langle D^2 f(z)\lambda,\lambda\rangle\leq M(1+|z|^2)^\frac{q-2}2|\lambda|^2.
\end{cases}
\end{equation}
\end{assumption}
Regularity properties of local minimizer of \eqref{eq:int} in the case $p=q$ are classical, see e.g.\ \cite{Giu}. A systematic regularity theory in the case $p<q$ was initiated by Marcellini in \cite{Mar89,Mar91}. In particular, Marcellini~\cite{Mar91} proves
\begin{itemize}
\item[(A)] If $2\leq p<q$ and $\frac{q}p<1+\frac{2}{n-2}$ if $n\geq3$, then every local minimizer $u\in W_{\rm loc}^{1,q}(\Omega)$ of \eqref{eq:int} satisfies $u\in W_{\rm loc}^{1,\infty}(\Omega)$.
\item[(B)] If $2\leq p<q$ and $\frac{q}p<1+\frac{2}{n}$, then every local minimizer $u\in W_{\rm loc}^{1,p}(\Omega)$ of \eqref{eq:int} satisfies $u\in W_{\rm loc}^{1,\infty}(\Omega)$.
\end{itemize}
We emphasize that establishing Lipschitz-regularity is the crucial point in the regularity theory for functionals with $(p,q)$-growth in the form \eqref{ass}. Indeed, local boundedness of the gradient implies that the non-standard growth of $f$ and $D^2f$ becomes irrelevant and higher regularity (depending on the smoothness of $f$) follows by standard arguments, see e.g.\ \cite[Chapter~7]{Mar89} and Corollary~\ref{C:smooth} below.

By now there is a large and quickly growing literature on regularity results for minimizers of functionals with $(p,q)$-growth, and more general non-standard growth, we refer to  \cite{BCM18,BM18,CM15,CMM15,EMM19,ELM99,ELM04,FS93,L93} and in particular to \cite{Min06} for an overview. Under additional structure assumptions on the growth of $f$, for example anisotropic growth of the form
$$m\sum_{i=1}^n|z_i|^{p_i}\leq f(z)\leq M\sum_{i=1}^n(1+|z_i|^{q}),$$
more precise and sharp assumptions on the involved exponents that ensure higher regularity are available in the literature, see e.g.\ \cite{FS93,CMM15}. Moreover, rather sharp conditions are known for certain non-autonomous functionals, see e.g.\ \cite{BCM18,CM15,ELM04}, where also H\"older-continuity of the integrand $f$ in the space variable has to be balanced with $p,q$ and $n$. To the best of our knowledge, there is no improvement of the results (A) and (B) with respect to the relation between the exponents $p,q$ and the dimension $n$ available in the literature (without any additional structure assumption). In the present paper, we give such an improvement for $n\geq3$. Before we state the results, we recall a standard notion of local minimizer in the context of functionals with $(p,q)$-growth
\begin{definition}
We call $u\in W_{\rm loc}^{1,1}(\Omega)$ a local minimizer of  $\mathcal F$ given in \eqref{eq:int} iff
\begin{equation*}
 f(\nabla u)\in L^1_{\rm loc}(\Omega)
\end{equation*}
and
\begin{equation*}
 \int_{{\rm supp}\,\varphi}f(\nabla u)\,dx\leq \int_{{\rm supp}\,\varphi}f(\nabla u+\nabla \varphi)\,dx
\end{equation*}
for any $\varphi\in W^{1,1}(\Omega)$ satisfying ${\rm supp}\;\varphi\Subset \Omega$.
\end{definition}
The main results of the present paper can be summarized as
\begin{theorem}\label{T:1}
Let $\Omega\subset \R^n$, $n\geq2$ and suppose Assumption~\ref{ass:1} is satisfied with $2\leq p\leq q<\infty$ such that
\begin{equation}\label{eq:assq}
\frac{q}{p}<1+\frac{2}{n-3}\qquad\mbox{if $n\geq4$}.
\end{equation}
Let $u\in W_{\rm loc}^{1,q}(\Omega)$ be a local minimizer of the functional $\mathcal F$ given in \eqref{eq:int}. Then, $u\in W_{\rm loc}^{1,\infty}(\Omega)$. 
%
\end{theorem}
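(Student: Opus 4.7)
My plan follows the standard route for $(p,q)$-growth regularity, so the real content sits in one refined inequality that replaces Marcellini's use of Sobolev embedding.

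\textbf{Step 1 (Approximation).} I would not work directly with the minimizer $u$ because $D^2 f$ has only $q$-growth and $u$ is only in $W^{1,q}$. Instead I regularize: replace $f$ by $f_\varepsilon(z):=f(z)+\varepsilon(1+|z|^2)^{q/2}$ (or mollify $f$ and add such a term) and consider the Dirichlet minimizer $u_\varepsilon$ on a ball $B_R\Subset\Omega$ with boundary datum $u$. The regularized functional has standard $q$-growth, hence $u_\varepsilon\in W^{1,\infty}_{\mathrm{loc}}$ and, by difference-quotient and Schauder, $u_\varepsilon\in C^{2,\alpha}$. Minimality together with $u\in W^{1,q}_{\mathrm{loc}}$ gives $\int f(\nabla u_\varepsilon)+\varepsilon\int(1+|\nabla u_\varepsilon|^2)^{q/2}\le\int f(\nabla u)+\varepsilon\int(1+|\nabla u|^2)^{q/2}$, so $u_\varepsilon\rightharpoonup u$ in $W^{1,p}$ and $f(\nabla u_\varepsilon)$ is bounded in $L^1$. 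All subsequent a priori estimates will be for $u_\varepsilon$ with constants independent of $\varepsilon$; passing $\varepsilon\downarrow0$ at the end transfers them to $u$.

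\textbf{Step 2 (Caccioppoli for gradient powers).} Differentiating the Euler--Lagrange equation for $u_\varepsilon$ in direction $x_k$, testing against $\eta^2\partial_k u_\varepsilon\,(1+|\nabla u_\varepsilon|^2)^{\gamma}$ for $\gamma\ge0$ and a cutoff $\eta$, and summing in $k$ yields a Caccioppoli-type inequality of the form
\begin{equation*}
\int \eta^2(1+|\nabla u_\varepsilon|^2)^{\frac{p-2}{2}+\gamma}|\nabla^2 u_\varepsilon|^2\,dx
\ \lesssim\ \int|\nabla\eta|^2\,(1+|\nabla u_\varepsilon|^2)^{\frac{q}{2}+\gamma}\,dx.
\end{equation*}
Written in terms of $w_\gamma:=(1+|\nabla u_\varepsilon|^2)^{\frac{p}{4}+\frac{\gamma}{2}}$ this controls $\int|\nabla(\eta w_\gamma)|^2$ by $\int|\nabla\eta|^2 w_\gamma^2 (1+|\nabla u_\varepsilon|^2)^{\frac{q-p}{2}}$, i.e.\ there is a loss of $(q-p)$ powers of the gradient which must be absorbed by Sobolev.

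\textbf{Step 3 (The improved integration: gain of one dimension).} This is the key point and the place where the constant $n-3$ enters instead of the classical $n-2$. The standard Moser iteration applies Sobolev $W^{1,2}\hookrightarrow L^{2n/(n-2)}$ on each ball, which forces $q/p<n/(n-2)$. Instead, I would use a one-dimensional Sobolev/Morrey inequality along radial directions (integrating the Caccioppoli bound in spherical coordinates): on almost every sphere $\partial B_r\subset B_R$, the $(n-1)$-dimensional Sobolev embedding $W^{1,2}(\partial B_r)\hookrightarrow L^{2(n-1)/(n-3)}(\partial B_r)$ (for $n\ge4$) combined with integration in $r$ of the Caccioppoli on annuli produces the improved self-improvement relation
\begin{equation*}
\Bigl(\int_{B_{r_1}}(1+|\nabla u_\varepsilon|^2)^{(p/2+\gamma)\cdot\frac{n-1}{n-3}}\Bigr)^{\frac{n-3}{n-1}}
\ \lesssim\ \frac{1}{(r_2-r_1)^2}\int_{B_{r_2}}(1+|\nabla u_\varepsilon|^2)^{q/2+\gamma},
\end{equation*}
with the exponent $\frac{n-1}{n-3}$ in place of the classical $\frac{n}{n-2}$. (Equivalently, one uses a mean-value identity along rays; the gain of one in the effective dimension mirrors the authors' earlier work on stochastic homogenization.) The compatibility condition for a Moser iteration that starts from $L^{q}$ integrability and improves $L^s$ to $L^{s\cdot(n-1)/(n-3)}$ on each step is precisely
\begin{equation*}
\frac{q/2+\gamma}{p/2+\gamma}<\frac{n-1}{n-3}\quad\text{for some }\gamma\ge0,
\end{equation*}
which, letting $\gamma\to\infty$ to first exploit a high base integrability, reduces to the threshold $q/p<1+2/(n-3)$ assumed in \eqref{eq:assq}.

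\textbf{Step 4 (Iteration and passage to the limit).} With the self-improving inequality in hand, a standard Moser iteration on a decreasing sequence of radii yields an $L^\infty$ bound
\begin{equation*}
\|\nabla u_\varepsilon\|_{L^\infty(B_{R/2})}\ \le\ C\Bigl(1+\int_{B_R}(1+|\nabla u_\varepsilon|^2)^{q/2}\Bigr)^{\theta}
\end{equation*}
with $C,\theta$ independent of $\varepsilon$. The right-hand side is bounded uniformly in $\varepsilon$ by the comparison from Step 1, and lower semicontinuity yields $\nabla u\in L^\infty_{\mathrm{loc}}$.

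\textbf{Main obstacle.} The routine Caccioppoli--Sobolev machinery (Steps 2 and 4) is essentially as in \cite{Mar91}; the difficulty is Step 3. Converting the gain from the $(n-1)$-dimensional Sobolev inequality on spheres into a genuinely usable inequality on balls requires care because the Caccioppoli inequality on an annulus mixes tangential and radial derivatives, and the choice of which derivative to place where (and which weight of $(1+|\nabla u_\varepsilon|^2)$ to couple with it) is what actually dictates the numerology $1+2/(n-3)$. Getting the right functional to iterate—and in particular verifying that the loss $(q-p)$ can always be absorbed for some choice of $\gamma$—is the delicate combinatorial point.
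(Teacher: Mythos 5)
Your overall strategy---replacing the classical Sobolev inequality on balls by the Sobolev inequality on spheres $W^{1,2}(S_r)\hookrightarrow L^{2(n-1)/(n-3)}(S_r)$ in order to improve the effective dimension from $n$ to $n-1$---is exactly the idea of the paper, and the arithmetic leading to the threshold $q/p<1+2/(n-3)$ is correct. The gap is in how Step~3 proposes to exploit the spherical Sobolev inequality: the self-improvement estimate you write down does not follow from it together with Caccioppoli, because of the direction of Jensen's inequality.

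To see the obstruction, apply Caccioppoli to $\eta w_\gamma$ and, on each sphere $S_r$, the spherical Sobolev inequality; this controls $\|\eta w_\gamma\|_{L^{2(n-1)/(n-3)}(S_r)}^2$ by $\|\eta w_\gamma\|_{W^{1,2}(S_r)}^2$. But to recover an $L^{2(n-1)/(n-3)}$-norm over a ball you must integrate the $2(n-1)/(n-3)$-th power in $r$, which produces $\int_0^{r_2}\|\eta w_\gamma\|_{W^{1,2}(S_r)}^{2(n-1)/(n-3)}\,dr$, i.e.\ the $L^{(n-1)/(n-3)}(dr)$-norm of $r\mapsto\|\eta w_\gamma\|_{W^{1,2}(S_r)}^2$, whereas the Caccioppoli inequality only bounds its $L^1(dr)$-norm. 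Since $(n-1)/(n-3)>1$, the former is strictly stronger, so the displayed inequality in your Step~3 is not a consequence of the ingredients you cite. This, rather than the tangential/radial split in Caccioppoli, is the genuine difficulty.

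The paper circumvents it by applying the spherical Sobolev inequality on the \emph{right-hand side} of Caccioppoli, after optimizing over radial cutoffs. Lemma~\ref{L:optimcutoff} shows that for any nonnegative weight $v\in L^1(B_\sigma)$ and any $\delta\in(0,1]$,
\[
\inf_\eta\int_{B_\sigma}|v|\,|\nabla\eta|^2
\;\leq\;(\sigma-\rho)^{-(1+\frac1\delta)}\Bigl(\int_\rho^\sigma\Bigl(\int_{S_r}|v|\Bigr)^{\delta}\,dr\Bigr)^{1/\delta},
\]
so the optimal cutoff converts the volume $L^1$-norm of $v$ into a $\delta$-average of sphere $L^1$-norms. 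Choosing $\delta=\gamma:=(n-3)/(n-1)$ and then bounding $\int_{S_r}|v|$ by the $(n-1)$-dimensional Sobolev inequality yields an $L^2(dr)$-integral of $W^{1,2}(S_r)$-norms, i.e.\ exactly a $W^{1,2}$-norm over the ball. This is why the one-step improvement \eqref{est:iteratestart} is formulated in terms of $\|\phi_\beta\|_{W^{1,2}(B_\rho)}$ with $\phi_\beta=\sum_j(1+u_{x_j}^2)^{\beta/4}$, not in terms of $L^s$-norms of $|\nabla u|$; your scheme would go through if you iterate these $W^{1,2}$-quantities instead.

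Two smaller points. First, regularizing $f$ and mollifying $u$ (your Step~1) is unnecessary for Theorem~\ref{T:1}: under $u\in W^{1,q}_{\rm loc}(\Omega)$ the difference-quotient method with the truncations $g_{\alpha,k}$ (Lemma~\ref{L:Mar}) already gives $u\in W^{2,2}_{\rm loc}$ and the starting Caccioppoli inequality~\eqref{est:start}; the paper only uses an approximation of this type in the proof of Theorem~\ref{T:2}, where $u$ is a priori merely $W^{1,1}_{\rm loc}$. Second, the compatibility condition $(q/2+\gamma)/(p/2+\gamma)<(n-1)/(n-3)$ is most restrictive at $\gamma=0$, not as $\gamma\to\infty$: it is the start of the iteration from $L^q$ integrability that binds, and this yields precisely \eqref{eq:assq}.
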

\begin{theorem}\label{T:2}
Let $\Omega\subset \R^n$, $n\geq2$ and suppose Assumption~\ref{ass:1} is satisfied with $2\leq p\leq q<\infty$ such that
\begin{equation}\label{eq:assq2}
\frac{q}{p}<1+\min\left\{1,\frac2{n-1}\right\}.
\end{equation}
Let $u\in W_{\rm loc}^{1,1}(\Omega)$ be a local minimizer of the functional $\mathcal F$ given in \eqref{eq:int}. Then, $u\in W_{\rm loc}^{1,\infty}(\Omega)$. 

\end{theorem}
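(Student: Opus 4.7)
The plan is to prove Theorem~\ref{T:2} by a regularization and approximation argument. First observe that the definition of local minimizer together with $f(\nabla u) \in L^1_{\text{loc}}(\Omega)$ and $f(z) \ge m|z|^p$ automatically implies $u \in W^{1,p}_{\text{loc}}(\Omega)$, so the $W^{1,1}_{\text{loc}}$ hypothesis is effectively $W^{1,p}_{\text{loc}}$. The essential difficulty compared to Theorem~\ref{T:1} is that we do not have the a~priori integrability $u \in W^{1,q}_{\text{loc}}$, so a direct Caccioppoli computation on $u$ cannot be rigorously justified; one must instead work with approximating minimizers.

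Fix $B \Subset \Omega$. We construct a family $\{f_k\}$ of $C^2$ strictly convex integrands that agree with $f$ on $\{|z| \le k\}$, satisfy Assumption~\ref{ass:1} with constants $m$, $M$, $p$, $q$ independent of $k$ on $\{|z| \le k\}$, and have standard growth (e.g.\ $s$-growth for some $s \ge q$, obtained via a convex truncation of $f$ outside $\{|z|\le k\}$ plus a small perturbation $\tfrac{1}{k}(1+|z|^2)^{s/2}$) at infinity. Let $u_k$ be the unique minimizer of $v \mapsto \int_B f_k(\nabla v)\, dx$ in the Dirichlet class $u + W^{1,s}_0(B)$. The standard growth of $f_k$ guarantees $u_k \in W^{1,\infty}_{\text{loc}}(B)$ by classical theory, so any Caccioppoli--Moser-type estimate for $u_k$ is rigorously justified.

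The main technical step is then the derivation of a Lipschitz estimate for $u_k$ that is \emph{uniform in $k$}. One performs a Caccioppoli inequality for $(1+|\nabla u_k|^2)^{\gamma/2}$---derived from the Euler--Lagrange equation differentiated in a coordinate direction and tested with a suitable cutoff---and iterates it via Moser. The crucial constraint is that only the $L^p$-integrability of $\nabla u_k$ (the only quantity we can control uniformly in $k$) may appear on the right-hand side of the iteration; in particular, the growth constant of $f_k$ at infinity must \emph{not} enter the final bound. The improvement over Marcellini's condition $q/p < 1+2/n$ comes from a refined choice of test function exploiting the scalar structure of the problem---morally, integrating out one variable to recover an $(n-1)$-dimensional Sobolev embedding---which closes the iteration exactly under the hypothesis $q/p < 1 + 2/(n-1)$; the $\min$ with $1$ in \eqref{eq:assq2} ensures that $q < 2p$ in the low-dimensional regime, where the $(n-1)$-factor is non-restrictive. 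Uniform control $\int_B |\nabla u_k|^p \le C$ follows from the minimality of $u_k$: $\int_B f_k(\nabla u_k) \le \int_B f_k(\nabla u) \to \int_B f(\nabla u) < \infty$ by dominated convergence in the approximation, combined with $f_k(z) \ge m|z|^p$.

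From the uniform Lipschitz bound, a subsequence $\{u_k\}$ converges locally uniformly and weakly-$*$ in $W^{1,\infty}_{\text{loc}}(B)$ to some $\tilde u$. A standard lower-semicontinuity plus minimality argument, using the strict convexity of $f$ and the local minimality of $u$, identifies $\tilde u = u$ on $B$. Hence $u \in W^{1,\infty}_{\text{loc}}(B)$, and arbitrariness of $B \Subset \Omega$ completes the proof. The main obstacle throughout is the uniform Caccioppoli--Moser iteration closing with only $L^p$-input on the gradient, which is precisely where the improvement from $2/n$ to $2/(n-1)$ in the dimension-dependent constraint originates.
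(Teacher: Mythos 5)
Your overall plan — approximate the problem by regularized functionals with classical minimizers, prove a Lipschitz estimate for the approximations that is uniform in the regularization parameter and that sees only the $L^p$-norm of the gradient on the right-hand side, then pass to the limit using lower semicontinuity and strict convexity — matches the paper's strategy. The actual proof proceeds via Theorem~\ref{T:2b}: the paper uses the \emph{perturbation} $f_\sigma(\xi)=f(\xi)+\sigma|\xi|^q$ (which keeps Assumption~\ref{ass:1} with the \emph{same} $p,q$ and a uniform constant $M'$) together with \emph{mollified boundary data} $u_\e=u*\varphi_\e$, and then sends $\sigma\to0$ before $\e\to0$; the uniform a~priori $L^p$-based estimate is Corollary~\ref{C:1b}, obtained from the $L^q$-based estimate of Theorem~\ref{T:1b2} via an interpolation iteration.

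However, your approximation scheme as written has a genuine gap. You replace $f$ by a truncation $f_k$ with standard $s$-growth at infinity, $s\geq q>p$, and then minimize over $u+W^{1,s}_0(B)$ with $u$ itself as boundary datum. But $u$ is only known to lie in $W^{1,p}_{\rm loc}$, and the added term $\tfrac1k(1+|z|^2)^{s/2}$ is not controlled by $f$; in particular $\int_B f_k(\nabla u)\,dx$ may well be $+\infty$ (already the case $s=q$ is problematic because $\int_B|\nabla u|^q$ need not be finite). Consequently the Dirichlet class you minimize over may have no competitor of finite energy, and the "dominated convergence" claim $\int_B f_k(\nabla u)\to\int_B f(\nabla u)$ is not justified since there is no integrable majorant for $f_k(\nabla u)$. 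This is precisely what the mollification of the boundary data is there to fix: $u_\e$ is smooth, hence $W^{1,q}$, so the minimization over $u_\e+W^{1,q}_0(B_1)$ is well posed, the energy $\int f_\sigma(\nabla u_\e)$ is finite, and $\int_{B_1}f(\nabla u_\e)\leq\int_{B_{1+\e}}f(\nabla u)$ by Jensen's inequality. Without some such device your argument does not close. A second, more minor gap: the Caccioppoli--Moser iteration naturally produces an estimate with $\|\nabla u\|_{L^q}$ on the right, and turning this into one with $\|\nabla u\|_{L^p}$ (as needed for uniformity in $k$) requires the interpolation/bootstrap argument of Corollary~\ref{C:1b}; you gesture at this constraint but do not supply the mechanism, and it is exactly there that the condition $\theta(1-p/q)<1$ (hence the $\min$ with $1$ in \eqref{eq:assq2}) enters.
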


\begin{remark}
Notice that Theorem~\ref{T:1} and \ref{T:2} improve the results {\rm (A)} and {\rm (B)} with respect to the assumptions on $\frac{q}p$ in dimensions $n\geq3$. The results in \cite{Mar91} apply to more general situations in the sense that: i) (smooth) spatial dependence of $f$ is allowed, ii) a bounded right-hand side is included and iii) nonlinear elliptic equations that not need to be Euler-Lagrange equations of integral functionals of the type \eqref{eq:int} are considered. In order to present the new ingredients in the simplest setting we focus the case of autonomous integral functionals with no right-hand side (as in \cite{Mar89}). Very recently~\cite{BM18} sharp criteria for Lipschitz-regularity of minimizers of variational integrals with respect to the right-hand side are obtained under the assumption $\frac{q}p<1+\frac{2}n$. It is interesting if this can be extended to the case $\frac{q}p<1+\frac{2}{n-1}$ if $n\geq3$.
\end{remark}

\begin{remark}
We do not know if assumptions \eqref{eq:assq} and \eqref{eq:assq2} are optimal in Theorem~\ref{T:1} and \ref{T:2} respectively. It is known that Lipschitz-regularity and even boundedness of minimizers fail if $\frac{q}p$ is to large depending on the dimension $n$. In particular it is known that in order to ensure boundedness it is necessary that $\frac{q}p\to1$ if $n\to\infty$, see \cite{G87,Mar89,Mar91,H92} for related counterexamples. In particular, it is shown in \cite{H92} that the functional
\begin{equation*}
 \int_\Omega |\nabla u|^2+|u_{x_n}|^4\,dx,
\end{equation*}
which satisfies \eqref{ass} with $p=2$ and $q=4$, admits an unbounded minimizer if $n\geq6$. Clearly, this does not match condition \eqref{eq:assq2} in Theorem~\ref{T:2} and even not condition \eqref{eq:assq}.
\end{remark}

\smallskip

As already mentioned, once boundedness of the gradient is established, higher regularity follows by standard arguments (see e.g.\ \cite[Proof of Theorem~D]{Mar89}). Let us state (without proof) a rather direct consequence of Theorem~\ref{T:2}
\begin{corollary}\label{C:smooth}
Let $\Omega\subset \R^n$, $n\geq2$ and suppose Assumption~\ref{ass:1} is satisfied with $2\leq p\leq q<\infty$ such that \eqref{eq:assq2}. Moreover, suppose that $z\mapsto f(z)$ is of class $C_{\rm loc}^{k,\alpha}$ for some integer $k\geq2$ and $\alpha\in(0,1)$. Let $u\in W_{\rm loc}^{1,1}(\Omega)$ be a local minimizer of the functional $\mathcal F$ given in \eqref{eq:int}. Then, $u\in C_{\rm loc}^{k+2,\alpha}(\Omega)$.
\end{corollary}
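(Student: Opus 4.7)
The plan is to reduce Corollary~\ref{C:smooth} to classical regularity theory for uniformly elliptic scalar equations, using the Lipschitz bound from Theorem~\ref{T:2} as the only input specific to the $(p,q)$-growth setting. First, Theorem~\ref{T:2} provides $u\in W^{1,\infty}_{\rm loc}(\Omega)$. Fixing any subdomain $\Omega'\Subset\Omega$ and setting $L:=\|\nabla u\|_{L^\infty(\Omega')}+1$, I would construct a modified integrand $\tilde f\in C^{k,\alpha}_{\rm loc}(\R^n)$ which coincides with $f$ on the ball $\{|z|\leq L\}$, grows quadratically at infinity, and satisfies uniform ellipticity $\nu|\lambda|^2\leq \langle D^2\tilde f(z)\lambda,\lambda\rangle\leq \Lambda|\lambda|^2$ globally (with $\nu,\Lambda>0$ depending on $L$). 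Such a cutoff is possible because Assumption~\ref{ass:1} already yields two-sided positive bounds on $D^2 f$ on $\{|z|\leq L\}$. On any open subset of $\Omega'$, $u$ is then simultaneously a local minimizer of the modified functional, which is now a standard uniformly elliptic variational problem with a $C^{k,\alpha}$ integrand.

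Next, the Euler--Lagrange equation $\operatorname{div}(D\tilde f(\nabla u))=0$ holds distributionally. A standard difference-quotient argument gives $u\in W^{2,2}_{\rm loc}(\Omega')$, and each partial derivative $\partial_i u$ is a weak solution of a linear divergence-form equation with bounded, uniformly elliptic, measurable coefficients. Since the problem is scalar, the De~Giorgi--Nash--Moser theorem then yields $\nabla u\in C^{0,\beta}_{\rm loc}(\Omega')$ for some $\beta\in(0,1)$.

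To conclude, I would bootstrap via Schauder. Each $\partial_i u$ solves
\begin{equation*}
\partial_j\bigl(a^{jk}(x)\,\partial_k(\partial_i u)\bigr)=0,\qquad a^{jk}(x):=\partial_{jk}\tilde f(\nabla u(x)),
\end{equation*}
and the H\"older regularity of $\nabla u$ combined with $\tilde f\in C^{k,\alpha}$ transfers, via composition, to the coefficients $a^{jk}$. Classical Schauder estimates then improve the H\"older regularity of $\nabla u$ by one derivative per iteration; after $k$ iterations one reaches $\nabla u\in C^{k+1,\alpha}_{\rm loc}(\Omega')$, i.e.\ $u\in C^{k+2,\alpha}_{\rm loc}(\Omega')$, and arbitrariness of $\Omega'$ finishes the argument.

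The only slightly delicate point is the construction of the extension $\tilde f$, but this amounts to a routine mollification/interpolation with a quadratic function outside a large ball; all subsequent steps are entirely classical and are essentially carried out in \cite[Proof of Theorem~D]{Mar89}. In particular, no new ingredient beyond the $L^\infty$ bound for $\nabla u$ is required, which is why the corollary follows directly from Theorem~\ref{T:2}.
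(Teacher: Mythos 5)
The paper states Corollary~\ref{C:smooth} explicitly \emph{without proof}, pointing to the standard bootstrap in \cite[Proof of Theorem~D]{Mar89}. Your outline---Lipschitz from Theorem~\ref{T:2}, cutoff of $f$ to reduce to a uniformly elliptic problem, De~Giorgi--Nash--Moser for $\nabla u\in C^{0,\beta}_{\rm loc}$, then Schauder bootstrapping---is exactly that standard route, so the strategy matches the paper's intent.

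Two concrete issues, though. First, the assertion that $u$ is ``simultaneously a local minimizer of the modified functional'' is both unnecessary and hard to justify: $\tilde f$ grows quadratically while $f$ grows at least like $|z|^p$ with $p\geq 2$, so $\tilde f\leq f$ is false for large $|z|$, and minimality of $u$ for $\int f$ does not transfer to $\int\tilde f$. What you actually use in the sequel---and what \emph{does} follow immediately from $D\tilde f=Df$ on $\{|z|\leq L\}$ together with $|\nabla u|\leq L$ a.e.\ on $\Omega'$---is that $u$ is a weak solution of the Euler--Lagrange equation $\operatorname{div}\!\bigl(D\tilde f(\nabla u)\bigr)=0$ in $\Omega'$. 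You should replace the minimality claim by this weaker (and correct) statement.

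Second, the bootstrap count at the end is not consistent with the hypothesis. Each $\partial_i u$ solves $\operatorname{div}(A\nabla\partial_i u)=0$ with $A=D^2\tilde f(\nabla u)$, and since $f\in C^{k,\alpha}_{\rm loc}$ one has $D^2\tilde f\in C^{k-2,\alpha}_{\rm loc}$; hence $A$ can never be more regular than $C^{k-2,\alpha}$, and Schauder then caps the iteration at $\partial_i u\in C^{k-1,\alpha}_{\rm loc}$, i.e.\ $u\in C^{k,\alpha}_{\rm loc}$. Your claim that ``after $k$ iterations one reaches $\nabla u\in C^{k+1,\alpha}_{\rm loc}$'' therefore overshoots by two levels (and even the arithmetic from $C^{0,\beta}$ plus $k$ one-step gains lands at $C^{k,\cdot}$, not $C^{k+1,\cdot}$). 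Either the corollary as stated has a slip in the exponent, or the hypothesis should read $D^2f\in C^{k,\alpha}_{\rm loc}$; in any case your sketch does not actually produce the claimed $C^{k+2,\alpha}$ from $f\in C^{k,\alpha}$, and you should either correct the iteration cap to $u\in C^{k,\alpha}_{\rm loc}$ or strengthen the hypothesis accordingly.
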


\smallskip

The proofs of Theorem~\ref{T:1}~and \ref{T:2} are in several aspects quite similar to the approach of Marcellini \cite{Mar89,Mar91}. Following \cite{Mar91}, we prove Theorem~\ref{T:1} appealing to the difference quotient method in order to differentiate the Euler-Lagrange equation and use a variant of Moser's iteration argument (see \cite{Moser60}) to prove boundedness of the gradient. The improvement compared to the previous results lies in a recent refinement of Moser's iteration argument in the context of linear non-uniformly elliptic equation obtained by the authors of the present paper in \cite{BS19a} (see \cite{BS19b} for an application to finite difference equations and stochastic analysis). In order to illustrate the relation between Theorem~\ref{T:1} and local boundedness results for non-uniformly elliptic equation, we suppose for the moment that $f$ satisfies \eqref{ass} with $2=p<q$. A local minimizer $u\in W_{\rm loc}^{1,q}(\Omega)$ of \eqref{eq:int} satisfies the Euler-Lagrange equation
\begin{equation*}
 \nabla\cdot Df(\nabla u)=0
\end{equation*}
and thus by differentiating 
\begin{equation}\label{eq:ELintro2}
\nabla \cdot D^2f(\nabla u)\nabla (\partial_j u)=0\qquad\mbox{for $j=1,\dots,n$.}
\end{equation}
The coefficient $D^2f(\nabla u)$ is non-uniformly elliptic and we have by \eqref{ass} and the assumption $u\in W_{\rm loc}^{1,q}(\Omega)$ 
$$
m|\lambda|^2\leq \langle D^2f(\nabla u)\lambda,\lambda\rangle\leq \mu|\lambda|^2\quad\mbox{where}\quad \mu:=M(1+|\nabla u|^2)^{\frac{q-2}{2}}\in L_{\rm loc}^{\frac{q}{q-2}}(\Omega)
$$ 
(recall $p=2$). Classic regularity results for linear non-uniformly elliptic equations due to Murthy and Stampaccia~\cite{MS68} and Trudinger~\cite{T71}, yield local boundedness of $\partial_j u$ if
\begin{equation*}
\frac{q-2}q<\frac2n\quad\Rightarrow\quad \frac{q}2<\frac{n}{n-2}=1+\frac2{n-2},
\end{equation*}
which is precisely Marcellini's condition (A) (in the case $p=2$). Very recently, the authors of the present paper improved in \cite{BS19a} the assumptions of \cite{MS68,T71} and established local boundedness and validity of Harnack inequality for linear elliptic equations under essentially optimal assumptions on the integrability of the coefficients, see \cite{FSS98} for related counterexamples. Applied to equation \eqref{eq:ELintro2}, the results of \cite{BS19a} yield local boundedness of $\partial_j u$  if
\begin{equation*}
\frac{q-2}q<\frac2{n-1}\quad\Rightarrow\quad \frac{q}2<\frac{n-1}{n-3}=1+\frac2{n-3},
\end{equation*}
which is precisely condition \eqref{eq:assq}. For $p>2$ the results of \cite{BS19a} applied to \eqref{eq:ELintro2} do not give the claimed condition \eqref{eq:assq} and thus we need to combine the reasoning of \cite{Mar91} with arguments of \cite{BS19a} and provide an essentially self-contained proof of Theorem~\ref{T:1}. Theorem~\ref{T:2} follows from Theorem~\ref{T:1} by a combination of an interpolation argument (similar to \cite[Theorem~3.1]{Mar91}) and a suitable approximation procedure (inspired by \cite{ELM99}).

\smallskip

The paper is organized as follows: In Section~\ref{sec:prelim}, we recall some results from \cite{Mar91} and present a technical lemma which is used to derive an improved version of Caccioppoli inequality which plays a prominent role in the proof of Theorem~\ref{T:1}. In Section~\ref{sec:T:1}, we prove Theorem~\ref{T:1} and provide a useful apriori estimate via interpolation, see Corollary~\ref{C:1b}. Finally, in Section~\ref{sec:T:2}, we establish Theorem~\ref{T:1} as a consequence of Corollary~\ref{C:1b} and an approximation argument.

\section{Preliminary lemmas}\label{sec:prelim}
%

For $\alpha\geq2$ and $k>0$, let $g_{\alpha,k}:\R\to\R$ be the unique $C^1(\R)$-function satisfying
\begin{equation}\label{def:g}
g_{\alpha,k}(t)=t(1+t^2)^{\frac{\alpha-2}2}\qquad\mbox{for $|t|\leq k$},
\end{equation}
and which is affine on $\R\setminus \{|t|\leq k\}$. Moreover, we set
\begin{equation}\label{def:Gak}
G_{\alpha,k}(t):=\frac{g_{\alpha,k}^2(t)}{g_{\alpha,k}'(t)}.
\end{equation}
The following bounds on $G_{\alpha,k}$ are derived in \cite{Mar91}
\begin{lemma}[\cite{Mar91}, Lemma~2.6]
For every $\alpha\in[2,\infty)$ and $k>0$ there exists $c=c(\alpha,k)\in[1,\infty)$ such that for all $t\in\R$
\begin{align}
G_{\alpha,k}(t)\leq& c_{\alpha,k}(1+t^2),\label{est:Galphak1}\\
G_{\alpha,k}(t)\leq& 2\left(\frac{1+k^2}{k^2}\right)^{\frac{\alpha-2}2}(1+t^2)^\frac{\alpha}2.\label{est:Galphak2}
\end{align}
\end{lemma}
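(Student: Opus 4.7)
The plan is to analyze $G_{\alpha,k}$ piecewise on $|t|\leq k$ and $|t|>k$. Since $g_{\alpha,k}$ is odd (on $[-k,k]$ it equals the odd function $t(1+t^2)^{(\alpha-2)/2}$, and the unique $C^1$ affine extension on each tail preserves oddness), by symmetry it suffices to work with $t\geq 0$. The key auxiliary object is the \emph{untruncated} function $h(t):=t(1+t^2)^{(\alpha-2)/2}$, which agrees with $g_{\alpha,k}$ on $[0,k]$ and whose tangent line at $t=k$ is precisely $g_{\alpha,k}$ restricted to $[k,\infty)$.

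On $[0,k]$ a routine differentiation gives $g'_{\alpha,k}(t)=(1+t^2)^{(\alpha-4)/2}(1+(\alpha-1)t^2)$, whence
\begin{equation*}
G_{\alpha,k}(t)=\frac{t^2(1+t^2)^{\alpha/2}}{1+(\alpha-1)t^2}.
\end{equation*}
Since $\alpha\geq2$ yields $t^2\leq 1+(\alpha-1)t^2$, one reads off the uniform estimate $G_{\alpha,k}(t)\leq(1+t^2)^{\alpha/2}$ on $[0,k]$. The first stated bound then follows from $(1+t^2)^{\alpha/2}\leq(1+k^2)^{(\alpha-2)/2}(1+t^2)$, and the second is immediate because $2((1+k^2)/k^2)^{(\alpha-2)/2}\geq 1$ for $\alpha\geq 2$.

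On $[k,\infty)$ I would combine two different upper bounds on $g_{\alpha,k}$. First, writing $g_{\alpha,k}(t)=g'_{\alpha,k}(k)\,t+A$ with $A:=g_{\alpha,k}(k)-g'_{\alpha,k}(k)\,k$, a short computation gives $A=(2-\alpha)k^{3}(1+k^2)^{(\alpha-4)/2}\leq 0$, so since $g_{\alpha,k}>0$ on $(0,\infty)$ one obtains the \emph{linear bound} $g_{\alpha,k}(t)\leq g'_{\alpha,k}(k)\,t$. Second, differentiating shows that
\begin{equation*}
h''(t)=t(\alpha-2)(1+t^2)^{(\alpha-6)/2}\bigl(3+(\alpha-1)t^2\bigr)\geq 0\quad\text{on }[0,\infty),
\end{equation*}
so $h$ is convex and therefore lies above its tangent line at $k$; this yields the \emph{convex bound} $g_{\alpha,k}(t)\leq t(1+t^2)^{(\alpha-2)/2}$ on $[k,\infty)$. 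Combining these via $\min(a,b)^2\leq ab$ and dividing by $g'_{\alpha,k}(t)=g'_{\alpha,k}(k)$,
\begin{equation*}
G_{\alpha,k}(t)\leq t^2(1+t^2)^{(\alpha-2)/2}\leq (1+t^2)^{\alpha/2},
\end{equation*}
which gives the second stated bound on $[k,\infty)$ as well. The first bound is read off directly from the linear estimate: $G_{\alpha,k}(t)\leq g'_{\alpha,k}(k)\,t^2\leq g'_{\alpha,k}(k)(1+t^2)$. Setting $c_{\alpha,k}:=\max\{(1+k^2)^{(\alpha-2)/2},\,g'_{\alpha,k}(k)\}$ then delivers the first bound globally. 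The only step that is not entirely mechanical is the convexity/tangent-line trick producing the convex bound on $g_{\alpha,k}$; it reduces to a short sign check on $h''$, after which the rest is bookkeeping.
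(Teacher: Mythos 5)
The paper itself does not prove this lemma; it simply cites \cite[Lemma~2.6]{Mar91}, so there is no in-paper argument to compare against. Your self-contained proof is correct. On $[0,k]$ the reduction $G_{\alpha,k}(t)=t^2(1+t^2)^{\alpha/2}/(1+(\alpha-1)t^2)$ and the inequality $t^2\leq 1+(\alpha-1)t^2$ (valid since $\alpha\geq2$) are right, and both stated bounds follow. On $[k,\infty)$ the two ingredients check out: $A=(2-\alpha)k^3(1+k^2)^{(\alpha-4)/2}\leq0$ gives the linear bound $g_{\alpha,k}(t)\leq g_{\alpha,k}'(k)\,t$, and the sign computation $h''(t)=t(\alpha-2)(1+t^2)^{(\alpha-6)/2}(3+(\alpha-1)t^2)\geq0$ shows $h$ is convex so that the affine tail, being the tangent line of $h$ at $k$, sits below $h$, giving $g_{\alpha,k}(t)\leq t(1+t^2)^{(\alpha-2)/2}$. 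Multiplying the two upper bounds for $g_{\alpha,k}(t)^2$ and dividing by the constant slope $g_{\alpha,k}'(k)$ then yields $G_{\alpha,k}(t)\leq t^2(1+t^2)^{(\alpha-2)/2}\leq(1+t^2)^{\alpha/2}$.

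Worth noting: your argument actually establishes the cleaner global estimate $G_{\alpha,k}(t)\leq(1+t^2)^{\alpha/2}$, which is strictly stronger than \eqref{est:Galphak2} since $2\bigl(\tfrac{1+k^2}{k^2}\bigr)^{(\alpha-2)/2}\geq1$. The extra factor in Marcellini's statement is the kind of loss one incurs from estimating the affine tail by $(a+b)^2\leq 2a^2+2b^2$ applied to $g_{\alpha,k}(t)=g_{\alpha,k}(k)+g_{\alpha,k}'(k)(t-k)$; pairing the linear bound with the convexity/tangent-line bound and using $g^2\leq ab$ avoids that loss entirely. Either version suffices for the way the lemma is used in the paper.
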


Appealing to the difference quotient method, it is proven in \cite{Mar91} that local minimizers of \eqref{eq:int} satisfying $W_{\rm loc}^{1,q}(\Omega)$ integrability enjoy higher differentiability.   
\begin{lemma}\label{L:Mar}
Let $\Omega\subset \R^n$, $n\geq2$ and suppose Assumption~\ref{ass:1} is satisfied with $2\leq p\leq q<\infty$. Let $u\in W_{\rm loc}^{1,q}(\Omega)$ be a local minimizer of the functional $\mathcal F$ given in \eqref{eq:int}. Then, $u\in W_{\rm loc}^{2,2}(\Omega)$. Moreover, for every $\eta\in C_c^1(\Omega)$, any $s\in\{1,\dots,n\}$ and any $\alpha\geq2$,
\begin{align}\label{est:start}
 \int_{\Omega}\eta^2g_{\alpha,k}'(u_{x_s})(1+|\nabla u|^2)^{\frac{p-2}2}|\nabla u_{x_s}|^2\,dx\leq \frac{4M}{m}\int_\Omega |\nabla \eta|^2 G_{\alpha,k}(u_{x_s})(1+|\nabla u|^2)^{\frac{q-2}2}\,dx.
\end{align}
\end{lemma}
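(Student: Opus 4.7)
The plan is to differentiate the Euler--Lagrange equation in direction $e_s$ by the difference quotient method, then insert the test function $\eta^2 g_{\alpha,k}(u_{x_s})$. Since $u\in W^{1,q}_{\rm loc}(\Omega)$ and $|Df(z)|\lesssim 1+|z|^{q-1}$ by Assumption~\ref{ass:1}, minimality of $u$ yields
\[
\int_\Omega Df(\nabla u)\cdot\nabla\varphi\,dx=0\qquad\text{for every $\varphi\in W^{1,q}(\Omega)$ with $\supp\varphi\Subset\Omega$.}
\]
For $|h|$ small and the standard difference quotient $\tau_{h,s}v(x):=h^{-1}(v(x+he_s)-v(x))$, I would first test with $\varphi=\tau_{-h,s}(\eta^2\tau_{h,s}u)$ and exploit the discrete integration by parts $\int(\tau_{-h,s}f)g\,dx=-\int f(\tau_{h,s}g)\,dx$ to derive a uniform-in-$h$ bound on $\int\eta^2(1+|\nabla u|^2)^{(p-2)/2}|\tau_{h,s}\nabla u|^2\,dx$; the upper $q$-growth of $D^2f$ together with $u\in W^{1,q}_{\rm loc}$ suffices to absorb the cross-term via Young's inequality. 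Because $p\geq 2$ makes the above weight bounded below by $1$, this standard computation delivers $u\in W^{2,2}_{\rm loc}(\Omega)$.

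With $u\in W^{2,2}_{\rm loc}$ in hand, and noting that $g_{\alpha,k}$ is globally Lipschitz (since it is affine for $|t|>k$), the differentiated Euler--Lagrange equation
\[
\int_\Omega D^2f(\nabla u)\nabla u_{x_s}\cdot\nabla\varphi\,dx=0
\]
can be tested with $\varphi=\eta^2 g_{\alpha,k}(u_{x_s})\in W^{1,2}_{\rm loc}(\Omega)$. Rigorously, one tests the original minimality condition with $\tau_{-h,s}(\eta^2 g_{\alpha,k}(\tau_{h,s}u))$ and passes to the limit $h\to 0$; this is justified because $g_{\alpha,k}$ and $g'_{\alpha,k}$ are uniformly bounded (in $t$, for fixed $k$), so all integrands are dominated in $L^1$. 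Expanding the gradient produces
\[
\int_\Omega\eta^2 g'_{\alpha,k}(u_{x_s})\langle D^2f(\nabla u)\nabla u_{x_s},\nabla u_{x_s}\rangle\,dx=-2\int_\Omega\eta\,g_{\alpha,k}(u_{x_s})\langle D^2f(\nabla u)\nabla u_{x_s},\nabla\eta\rangle\,dx.
\]

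The final step is to apply Cauchy--Schwarz for the positive semi-definite bilinear form $\langle D^2f(\nabla u)\cdot,\cdot\rangle$ to the right-hand side, followed by Young's inequality weighted by $\eta^2 g'_{\alpha,k}(u_{x_s})$ chosen to absorb one half of the left-hand side. This yields
\[
\tfrac12\int_\Omega\eta^2 g'_{\alpha,k}(u_{x_s})\langle D^2f\nabla u_{x_s},\nabla u_{x_s}\rangle\,dx\leq 2\int_\Omega\frac{g^2_{\alpha,k}(u_{x_s})}{g'_{\alpha,k}(u_{x_s})}\langle D^2f\nabla\eta,\nabla\eta\rangle\,dx.
\]
The lower ellipticity bound $\langle D^2f\lambda,\lambda\rangle\geq m(1+|\nabla u|^2)^{(p-2)/2}|\lambda|^2$ on the left, the upper bound $\langle D^2f\lambda,\lambda\rangle\leq M(1+|\nabla u|^2)^{(q-2)/2}|\lambda|^2$ on the right (with $\lambda=\nabla\eta$), together with the definition $G_{\alpha,k}=g_{\alpha,k}^2/g'_{\alpha,k}$, deliver \eqref{est:start} with the sharp constant $4M/m$. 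The main technical obstacle is the justification of the difference quotient limits under only $W^{1,q}_{\rm loc}$ a priori integrability; the truncation parameter $k$ in the definition of $g_{\alpha,k}$ is essential precisely because it keeps $g_{\alpha,k}$ and $g'_{\alpha,k}$ bounded, so that dominated convergence applies in the passage $h\to 0$.
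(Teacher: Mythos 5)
Your overall strategy — test the Euler--Lagrange equation with $\tau_{s,-h}\big(\eta^2 g_{\alpha,k}(\tau_{s,h}u)\big)$, write the differences of $Df$ via the fundamental theorem of calculus, and absorb the cross-term by Cauchy--Schwarz for the bilinear form $\langle D^2 f\,\cdot,\cdot\rangle$ and Young's inequality — is indeed the paper's strategy, and your Cauchy--Schwarz/Young computation gives the right constant $4M/m$. But the order in which you perform the two key operations, the limit $h\to 0$ and the absorption, is reversed relative to the paper, and this creates a genuine gap.

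You first pass to the limit $h\to 0$ in the \emph{identity} (the discrete analogue of $(I)=(II)$) to obtain the differentiated Euler--Lagrange equation tested with $\eta^2 g_{\alpha,k}(u_{x_s})$, and only then apply Young's inequality. You justify the limit by saying ``$g_{\alpha,k}$ and $g'_{\alpha,k}$ are uniformly bounded, so all integrands are dominated in $L^1$.'' First, $g_{\alpha,k}$ is \emph{not} bounded — it is affine (hence linearly growing) for $|t|>k$; only $g'_{\alpha,k}$ is bounded. More importantly, even granting that $g'_{\alpha,k}$ is bounded, domination fails for the term
\begin{equation*}
\eta^2\, g'_{\alpha,k}(\tau_{s,h}u)\int_0^1\big\langle D^2 f(\nabla u+th\,\tau_{s,h}\nabla u)\,\tau_{s,h}\nabla u,\,\tau_{s,h}\nabla u\big\rangle\,dt
\;\le\; M\,\|g'_{\alpha,k}\|_\infty\,\eta^2\int_0^1(1+|\nabla u+th\,\tau_{s,h}\nabla u|^2)^{\frac{q-2}2}|\tau_{s,h}\nabla u|^2\,dt,
\end{equation*}
because the only a priori information is $u\in W^{1,q}_{\rm loc}\cap W^{2,2}_{\rm loc}$, which does \emph{not} put $(1+|\nabla u|^2)^{\frac{q-2}{2}}|\nabla u_{x_s}|^2$ in $L^1_{\rm loc}$ (unless $p=q$). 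In fact, the very estimate \eqref{est:start} one is trying to prove only delivers the weaker weight $(1+|\nabla u|^2)^{\frac{p-2}{2}}$ on $|\nabla u_{x_s}|^2$; the $q$-weighted quantity is never controlled. So the pointwise limit you display for the differentiated Euler--Lagrange equation is not justified, and the claim that ``all integrands are dominated in $L^1$'' is false.

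The paper sidesteps this precisely by applying Young's inequality \emph{before} sending $h\to0$. After the absorption, the problematic term carrying $|\tau_{s,h}\nabla u|^2$ and the $q$-growth weight of $D^2f$ sits on the \emph{left-hand side} of an inequality, where one only needs Fatou's lemma (along a subsequence with a.e.\ convergence), no domination required. The right-hand side that remains after absorption contains only $G_{\alpha,k}(\tau_{s,h}u)\,(1+|\nabla u+th\,\tau_{s,h}\nabla u|^2)^{\frac{q-2}{2}}|\nabla\eta|^2$, and since $G_{\alpha,k}$ grows quadratically and $u_{x_s}\in L^q_{\rm loc}$ while $(1+|\nabla u|^2)^{\frac{q-2}{2}}\in L^{\frac{q}{q-2}}_{\rm loc}$, this \emph{is} amenable to dominated convergence. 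To repair your argument, keep the difference-quotient formulation, perform the Cauchy--Schwarz/Young absorption on the discrete identity, and only then let $h\to0$; this matches the paper's Lemma~\ref{L:Mar} and is exactly where the truncation $k$ earns its keep.
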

Lemma~\ref{L:Mar} is proven along the lines in \cite{Mar91}. However, estimate \eqref{est:start}, which is the starting point for our analysis, is not explicitly stated in \cite{Mar91} (as mentioned above, \cite{Mar91} deals with more general equations and additional terms appear on the right-hand side to which our methods do not directly apply) and thus we sketch the proof of Lemma~\ref{L:Mar} following the reasoning of \cite{Mar91}.
\begin{proof}[Proof of Lemma~\ref{L:Mar}]
First, we note that since $u\in W_{\rm loc}^{1,q}(\Omega)$ and  $|Df(z)|\leq c(1+|z|)^{q-1}$ for some $c=c(M,n,q)\in[1,\infty)$ (by $(\ref{ass})_2$ and $(\ref{ass})_3$), we obtain that $u$ solves the Euler-Lagrange equation
\begin{equation}\label{eq:euler}
\int_\Omega \langle Df(\nabla u), \nabla \varphi\rangle\,dx=0\qquad\mbox{for all $\varphi\in W^{1,q}(\Omega)$ with ${\rm supp}\,\varphi\Subset \Omega$.}
\end{equation}
For $s\in\{1,\dots,n\}$, we consider the difference quotient operator
\begin{equation*}
 \tau_{s,h}v:=\tfrac1h(v(\cdot+he_s)-v)\qquad\mbox{where $v\in L_{\rm loc}^1(\R^n)$}.
\end{equation*}
Fix $\eta\in C_c^1(\Omega)$. Testing \eqref{eq:euler} with $\varphi:=\tau_{s,-h}(\eta^2g_{\alpha,k}(\tau_{s,h}u))$, we obtain
\begin{align*}
 (I):=&\int_\Omega \eta^2 g_{\alpha,k}'(\tau_{s,h}u)\langle \tau_{s,h}Df(\nabla u),\tau_{s,h}\nabla u\rangle \,dx\\
 =&-2\int_\Omega  \eta g_{\alpha,k}(\tau_{s,h}u)\langle\tau_{s,h}Df(\nabla u), \nabla \eta\rangle\,dx=:(II).
\end{align*}
Writing $\tau_{s,h}Df(\nabla u)=\frac1h Df(\nabla u+th\tau_{s,h}\nabla u)\big|_{t=0}^{t=1}$, the fundamental theorem of calculus yields
\begin{align}\label{est:Lmar1}
 &\int_\Omega\int_0^1 \eta^2g_{\alpha,k}'(\tau_{s,h}u)\langle D^2f(\nabla u+th\tau_{s,h}\nabla u))\tau_{s,h}\nabla u,\tau_{s,h}\nabla u\rangle\,dt\,dx=(I)\notag\\
 =&(II)=-2\int_\Omega\int_0^1  \eta g_{\alpha,k}(\tau_{s,h}u)\langle D^2f(\nabla u+th\tau_{s,h}\nabla u)\tau_{s,h}\nabla u, \nabla \eta\rangle\,dt\,dx.
\end{align}
Youngs inequality and the definition of $G_{\alpha,k}$, see \eqref{def:Gak}, then yield
\begin{equation}\label{est:Lmar2}
 |(II)|\leq \tfrac12(I)+2(III),
\end{equation}
where
\begin{equation*}
(III):=\int_\Omega\int_0^1G_{\alpha,k}(\tau_{s,h}u)\langle D^2f(\nabla u+th\tau_{s,h}\nabla u)\nabla \eta,\nabla \eta\rangle\,dt\,dx.
\end{equation*}
Combining \eqref{est:Lmar1}, \eqref{est:Lmar2} with the assumptions on $D^2f$, see \eqref{ass}, we obtain for all $\alpha\geq2$
\begin{align}\label{est:Lmar3}
&m\int_\Omega\int_0^1 \eta^2g_{\alpha,k}'(\tau_{s,h}u)(1+|\nabla u+th\tau_{s,h}\nabla u|^2)^\frac{p-2}2 |\tau_{s,h}\nabla u|^2\,dx\leq (I)\notag\\
\leq& 4(III)\leq4M\int_\Omega\int_0^1 G_{\alpha,k}(\tau_{s,h}u)(1+|\nabla u+th\tau_{s,h}\nabla u|^2)^\frac{q-2}2|\nabla \eta|^2\,dx.
\end{align}
Estimate \eqref{est:Lmar3} with $\alpha=2$ (and thus $g_{2,k}=t$, $g_{2,k}'=1$ and $G_{2,k}(t)=t^2$, see \eqref{def:g}, \eqref{def:Gak}), the assumption  $u\in W_{\rm loc}^{1,q}(\Omega)$ and the arbitrariness of $\eta\in C_c^1(\Omega)$ and $s\in\{1,\dots,n\}$ yield $u\in W_{\rm loc}^{2,2}(\Omega)$. Finally, sending $h$ to zero in \eqref{est:Lmar3} we obtain the desired estimate \eqref{est:start} (for this we use that $G_{\alpha,k}$ is quadratic for every $k>0$, see \eqref{est:Galphak1}, and thus $G_{\alpha,k}(\tau_{s,h}u)\to G_{\alpha,k}(u_{x_s})$ in $L^\frac{q}2(\Omega')$ for any $\Omega'\Subset\Omega$).

\end{proof}

To this point, we essentially recalled notation and statements from \cite{Mar91}. Following \cite{Mar91}, we will combine \eqref{est:start}~with a Moser iteration type argument to establish the desired Lipschitz-estimate. In contrast to \cite{Mar91}, we optimize estimate \eqref{est:start} with respect to $\eta$ which will enable us to use Sobolev inequality on spheres instead of balls. The following lemma captures the needed improvement due to a suitable choice of the cut-off function. 

\begin{lemma}\label{L:optimcutoff}
Fix $n\geq2$. For given $0<\rho<\sigma<\infty$ and $v\in L^1(B_\sigma)$ consider
\begin{equation*}
 J(\rho,\sigma,v):=\inf\left\{\int_{B_\sigma}|v||\nabla \eta|^2\,dx \;|\;\eta\in C_0^1(B_\sigma),\,\eta\geq0,\,\eta=1\mbox{ in $B_\rho$}\right\}.
\end{equation*}
Then 
\begin{equation}\label{1dmin}
 J(\rho,\sigma,v)\leq  (\sigma-\rho)^{-(1+\frac1\delta)} \biggl(\int_{\rho}^\sigma \left(\int_{S_r} |v|\right)^\delta\,dr\biggr)^\frac1\delta\qquad\mbox{for every $\delta\in(0,1]$}.
\end{equation}
\end{lemma}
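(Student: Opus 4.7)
The plan is to restrict the infimum defining $J(\rho,\sigma,v)$ to radial cutoffs $\eta(x)=\varphi(|x|)$, solve the resulting one-dimensional minimization by Cauchy--Schwarz, and then trade the answer for the $L^\delta$ expression appearing in \eqref{1dmin} via Hölder. To set up the reduction, I would take a nonincreasing profile $\varphi\in C^1([0,\sigma])$ with $\varphi=1$ on $[0,\rho]$ and $\varphi\equiv 0$ near $\sigma$, and write $\psi:=-\varphi'\geq 0$. The coarea formula then gives
\begin{equation*}
\int_{B_\sigma}|v||\nabla\eta|^2\,dx=\int_\rho^\sigma \psi(r)^2 h(r)\,dr,\qquad h(r):=\int_{S_r}|v|,
\end{equation*}
while the boundary conditions translate into $\int_\rho^\sigma \psi\,dr=1$.

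The one-dimensional problem is now a weighted isoperimetric-type minimization. Applying Cauchy--Schwarz to the factorization $1=\psi\, h^{1/2}\cdot h^{-1/2}$ yields
\begin{equation*}
1\leq \Big(\int_\rho^\sigma \psi^2 h\,dr\Big)\Big(\int_\rho^\sigma h^{-1}\,dr\Big),
\end{equation*}
so $J(\rho,\sigma,v)\leq \bigl(\int_\rho^\sigma h^{-1}\,dr\bigr)^{-1}$, with equality essentially attained by the (formal) choice $\psi\propto h^{-1}$. To convert this into the quantity on the right of \eqref{1dmin}, I would apply Hölder's inequality with conjugate exponents $(1+\delta)/\delta$ and $1+\delta$ (both legitimate for $\delta\in(0,1]$) to the trivial identity $1=h^{-\delta/(1+\delta)}\cdot h^{\delta/(1+\delta)}$, obtaining
\begin{equation*}
\sigma-\rho\leq\Big(\int_\rho^\sigma h^{-1}\,dr\Big)^{\delta/(1+\delta)}\Big(\int_\rho^\sigma h^\delta\,dr\Big)^{1/(1+\delta)}.
\end{equation*}
Raising to the power $(1+\delta)/\delta=1+1/\delta$ and rearranging delivers exactly \eqref{1dmin}.

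The main technical point I expect to need care with is the equality case of Cauchy--Schwarz, since the formal minimizer $\psi=(\int h^{-1})^{-1}h^{-1}$ is only as regular as $h^{-1}$ and a priori need not produce an $\eta\in C_0^1(B_\sigma)$ after radialization. This is handled by a standard truncation-mollification: replace $h^{-1}$ by $\min(h^{-1},L)$, mollify the resulting bounded profile, rescale so that the primitive satisfies the boundary conditions $\varphi(\rho)=1$, $\varphi(\sigma)=0$, and then send $L\to\infty$. Degenerate situations -- for instance when $h\equiv 0$ on a subset of positive measure, which forces $\int h^{-1}=\infty$ -- only make the intermediate bound $J(\rho,\sigma,v)=0$, so the conclusion holds a fortiori; if $\int h^\delta=\infty$ the right-hand side of \eqref{1dmin} is itself infinite and there is nothing to prove.
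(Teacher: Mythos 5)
Your proposal is correct and follows essentially the same route as the paper: restrict to radial cutoffs to reduce to a one-dimensional weighted problem with weight $h(r)=\int_{S_r}|v|$, bound the resulting minimum by $\bigl(\int_\rho^\sigma h^{-1}\bigr)^{-1}$, and convert to the $L^\delta$-form via H\"older's inequality with exponents $(1+\delta)/\delta$ and $1+\delta$. Two small remarks on your write-up. First, the intermediate sentence is logically reversed: Cauchy--Schwarz applied to $1=\int\psi\,dr$ proves the \emph{lower} bound $\int\psi^2 h\geq\bigl(\int h^{-1}\bigr)^{-1}$, whereas the upper bound on $J$ --- which is what the lemma requires --- comes from exhibiting (a suitable approximation of) the formal optimizer $\psi\propto h^{-1}$, as you do say afterwards; in fact the Cauchy--Schwarz step is not needed for the lemma at all. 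Second, the paper sidesteps the entire truncation--mollification discussion and the degenerate-case analysis by replacing $h$ with $h+\varepsilon$, $\varepsilon>0$, from the outset: the weight is then bounded away from zero, so $\widetilde\eta(r)=1-\bigl(\int_\rho^\sigma(h+\varepsilon)^{-1}\bigr)^{-1}\int_\rho^r(h+\varepsilon)^{-1}$ is automatically Lipschitz (and passing from $C_0^1$ to $W^{1,\infty}$ test functions is a standard approximation); one then applies H\"older and sends $\varepsilon\to0$ only at the very end. Your approach works too, but the $\varepsilon$-regularization is the cleaner way to make the near-optimizer admissible.
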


\begin{proof}[Proof of Lemma~\ref{L:optimcutoff}]
Estimate \eqref{1dmin} follows directly by minimizing among radial symmetric cut-off functions. Indeed, we obviously have for every $\e\geq0$
\begin{equation*}
 J(\rho,\sigma,v)\leq \inf\left\{\int_{\rho}^\sigma \eta'(r)^2\left(\int_{S_r}|v|+\e\right)\,dr \;|\;\eta\in C^1(\rho,\sigma),\,\eta(\rho)=1,\,\eta(\sigma)=0\right\}=:J_{{\rm 1d},\e}.
\end{equation*}
For $\e>0$, the one-dimensional minimization problem $J_{{\rm 1d},\e}$ can be solved explicitly and we obtain
\begin{equation}\label{1dmin:2}
J_{{\rm 1d},\e}=\biggl(\int_{\rho}^\sigma \biggl(\int_{S_r}|v|+\e\biggr)^{-1}\,dr\biggr)^{-1}.
\end{equation}
Let us give an argument for \eqref{1dmin:2}. First we observe that using the assumption $v\in L^1(B_\sigma)$ and a simple approximation argument we can replace $\eta\in C^1(\rho,\sigma)$ with $\eta\in W^{1,\infty}(\rho,\sigma)$ in the definition of $J_{{\rm 1d},\e}$. Let $\widetilde\eta:[\rho,\sigma]\to[0,\infty)$ be given by
$$\widetilde\eta(r):=1-\biggl(\int_\rho^\sigma b(r)^{-1}\,dr\biggr)^{-1}\int_{\rho}^rb(r)^{-1}\,dr,\quad\mbox{where $b(r):=\int_{S_r}|v|+\e$}.$$
Clearly, $\widetilde \eta\in W^{1,\infty}(\rho,\sigma)$ (since $b\geq\e>0$), $\widetilde \eta(\rho)=1$, $\widetilde \eta(\sigma)=0$, and thus
\begin{equation*}
J_{{\rm 1d},\e}\leq\int_{\rho}^\sigma \widetilde\eta'(r)^2b(r)\,dr=\biggl(\int_{\rho}^\sigma b(r)^{-1}\,dr\biggr)^{-1}.
\end{equation*}
The reverse inequality follows by H\"older's inequality: For every $\eta\in W^{1,\infty}(\rho,\sigma)$ satisfying $\eta(\rho)=1$ and $\eta(\sigma)=0$, we have 
\begin{equation*}
1=\left(\int_\rho^\sigma \eta'(r)\,dr\right)^2\leq \int_{\rho}^\sigma\eta'(r)^2b(r)\,dr\int_{\rho}^\sigma b(r)^{-1}\,dr.
\end{equation*}
Clearly, the last two displayed formulas imply \eqref{1dmin:2}.

Next, we deduce \eqref{1dmin} from \eqref{1dmin:2}. For every $s>1$, we obtain by H\"older inequality $\sigma-\rho=\int_\rho^\sigma (\frac{b}{b})^\frac{s-1}s\leq\left(\int_\rho^\sigma b^{s-1}\right)^\frac1s\left(\int_\rho^\sigma \frac1{b}\right)^\frac{s-1}{s}$ with $b$ as above, and by \eqref{1dmin:2} that
\begin{equation*}
J_{{\rm 1d},\e}\leq (\sigma-\rho)^{-\frac{s}{s-1}}\biggl(\int_{\rho}^\sigma \left(\int_{S_r}|v|+\e\right)^{s-1}\,dr\biggr)^{\frac{1}{s-1}}.
\end{equation*}
Sending $\e$ to zero, we obtain \eqref{1dmin} with $\delta=s-1>0$.
\end{proof}

\section{Proof of Theorem~\ref{T:1}}\label{sec:T:1}

The main result of this section is the following 
\begin{theorem}\label{T:1b2}
Let $\Omega\subset \R^n$, $n\geq3$, and suppose Assumption~\ref{ass:1} is satisfied with $2\leq p<q<\infty$ such that \eqref{eq:assq}. Fix 
\begin{equation}\label{def:theta}
\theta=\frac{2q}{(n-1)p-(n-3)q}\quad\mbox{if $n\geq4$ and}\quad\theta>\frac{q}{p}\quad\mbox{if $n=3$}.
\end{equation}
Let $u\in W_{\rm loc}^{1,q}(\Omega)$ be a local minimizer of the functional $\mathcal F$ given in \eqref{eq:int}. Then, there exists $c=c(n,m,M,p,q,\theta)\in[1,\infty)$ such that for every $B_{R}(x_0)\Subset \Omega$ and any $\rho\in(0,1)$  
\begin{equation}\label{est:T:1b2}
\|(1+|\nabla u|^2)^\frac12\|_{L^\infty(B_{\rho R}(x_0))}\leq c ((1-\rho)R)^{-n\frac{\theta}q}\|(1+|\nabla u|^2)^\frac12\|_{L^q(B_R(x_0))}^\theta.
\end{equation} 
\end{theorem}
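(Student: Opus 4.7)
The proof would proceed via a Moser iteration combining two new ingredients on top of \cite{Mar91}: the cutoff optimization of Lemma~\ref{L:optimcutoff}, which effectively replaces Sobolev on $n$-dimensional balls by Sobolev on $(n-1)$-dimensional spheres, and a H\"older interpolation against $\|F\|_{L^q}$ (where $F := (1+|\nabla u|^2)^{1/2}$) to absorb the weight mismatch in \eqref{est:start}.

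First, working component-wise (i.e.\ for each fixed $s\in\{1,\dots,n\}$), I would send $k\to\infty$ in \eqref{est:start} and use the pointwise inequality $(1+u_{x_s}^2)^{(\alpha-2)/2}|\nabla u_{x_s}|^2 \gesim \alpha^{-2}|\nabla w_s^{\alpha/2}|^2$ (with $w_s := (1+u_{x_s}^2)^{1/2}$) together with $F^{p-2}\geq 1$ to derive, for every $\alpha\geq 2$, a Caccioppoli-type inequality
\[
\int_\Omega \eta^2 |\nabla w_s^{\alpha/2}|^2\,dx \;\lessim\; \alpha^2\int_\Omega |\nabla\eta|^2\, w_s^\alpha\, F^{q-2}\,dx.
\]
For concentric balls $B_\rho\subset B_\sigma\subset B_R$, optimizing over $\eta$ via Lemma~\ref{L:optimcutoff} with $v = w_s^\alpha F^{q-2}$ and some $\delta\in(0,1]$ to be chosen then replaces the right-hand side by $(\sigma-\rho)^{-(1+1/\delta)}\bigl(\int_\rho^\sigma (\int_{S_r} w_s^\alpha F^{q-2})^\delta dr\bigr)^{1/\delta}$.

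Next, on each sphere $S_r$ I would apply the $(n-1)$-dimensional Sobolev embedding $W^{1,2}(S_r)\hookrightarrow L^{2(n-1)/(n-3)}(S_r)$ (for $n\geq 4$; for $n=3$ one uses $W^{1,2}(S_r)\hookrightarrow L^r(S_r)$ for every $r<\infty$, which leads to the weaker condition $\theta>q/p$) to $w_s^{\alpha/2}$, combined with a H\"older step that separates the $F^{q-2}$ factor, controlled (after a further H\"older in $r$) by a suitable power of $\|F\|_{L^q(B_R)}$. This produces a reverse-H\"older-type inequality of the form
\[
\|w_s\|_{L^{\chi\alpha}(B_\rho)}^\alpha \;\lessim\; \alpha^{c_1}(\sigma-\rho)^{-c_2}\,\|F\|_{L^q(B_R)}^{q-p}\,\|w_s\|_{L^\alpha(B_\sigma)}^\alpha,
\]
with multiplier $\chi = (n-1)/(n-3)$ dictated by the sphere-Sobolev exponent. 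The crucial observation is that \eqref{eq:assq} is equivalent to $\chi p>q$, which is exactly what guarantees that the iteration multiplier strictly exceeds the growth ratio $q/p$.

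Finally, a standard Moser iteration with $\alpha_j = \chi^j\alpha_0$ for a suitable starting exponent (e.g.~$\alpha_0=q$) and a geometric sequence of radii $\rho_j\searrow\rho R$, followed by the elementary pointwise bound $F\leq\sqrt{n}\max_s w_s$, delivers \eqref{est:T:1b2}, with $\theta$ emerging as the geometric sum of the H\"older-interpolation exponents accumulated over the iteration. The main technical obstacle will be the algebraic bookkeeping: one must simultaneously choose $\delta\in(0,1]$, the H\"older splitting of $w_s^\alpha\cdot F^{q-2}$ on $S_r$, and the sphere-Sobolev exponent so that the effective iteration multiplier equals $(n-1)/(n-3)$ while the accumulated powers of $\|F\|_{L^q(B_R)}$ sum to exactly the $\theta$ of \eqref{def:theta}. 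The borderline case $n=3$ requires a small modification, since the Sobolev exponent on $S^2$ can be made arbitrarily large at the cost of worse constants, yielding any $\theta>q/p$ rather than a sharp value.
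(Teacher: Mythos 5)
Your proposal correctly identifies the two new ingredients (Lemma~\ref{L:optimcutoff} plus Sobolev on spheres) and the correct iteration multiplier $\chi=(n-1)/(n-3)=1/\gamma$, but there are two linked gaps that would prevent it from yielding condition \eqref{eq:assq} for general $p\geq2$. First, you discard the factor $(1+|\nabla u|^2)^{(p-2)/2}$ on the left of \eqref{est:start} via ``$F^{p-2}\geq1$''. This is harmless for $p=2$ but lossy for $p>2$: the paper keeps it and, since $1+|\nabla u|^2\geq1+u_{x_j}^2$, uses it to shift the left-hand exponent from $\alpha$ to $\alpha+p-2$ (cf.~\eqref{est:nablaphiap2}). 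That shift is exactly what allows the iteration $\alpha_{k-1}+p-2=(\alpha_k+q-2)\gamma$ to run under $q/p<(n-1)/(n-3)$ with a non-trivial margin. Without it, your H\"older split of $w_s^\alpha F^{q-2}$ on $S_r$ (with the sphere-Sobolev exponent forcing the H\"older conjugate $(n-1)/2$) needs $\|F^{q-2}\|_{L^{(n-1)/2}(S_r)}$ controlled by $\|F\|_{L^q}$, which amounts to the restriction $(q-2)\frac{n-1}{2}\leq q$, i.e.\ $q\leq 2(n-1)/(n-3)$ --- the $p=2$ condition rather than \eqref{eq:assq}. Second, the power $\|F\|_{L^q}^{q-p}$ in your claimed reverse-H\"older inequality is not reachable from your own derivation: having discarded $F^{p-2}$, the H\"older split can only give a power tied to $q-2$, not $q-p$, and the resulting accumulated exponent does not simplify to the $\theta$ of \eqref{def:theta}.

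The paper sidesteps both issues by not interpolating against $\|F\|_{L^q}$ inside the iteration at all. Instead it absorbs the weight $(1+|\nabla u|^2)^{(q-2)/2}$ by Young's inequality (Marcellini's \cite[Lemma~2.9]{Mar91}, reproduced in \eqref{eq:Mar91L29}): the product $(1+u_{x_s}^2)^{\alpha/2}(1+|\nabla u|^2)^{(q-2)/2}$ is bounded, up to a dimensional constant, by $\sum_j(1+u_{x_j}^2)^{(\alpha+q-2)/2}$. This is why the paper works with the summed quantity $\phi_\beta=\sum_j(1+u_{x_j}^2)^{\beta/4}$ rather than component-wise with $w_s$ --- Young's inequality needs the sum. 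The resulting one-step inequality \eqref{est:iteratestart} is then purely in terms of the $\phi_\beta$ hierarchy with no $\|F\|_{L^q}$ factor, and after iterating to $L^\infty$ the bound is on $\|\phi_p\|_{W^{1,2}(B_1)}$; a single application of the crude Caccioppoli estimate \eqref{est:iteratestart2} (on a ball, no spheres) converts this to $\|(1+|\nabla u|^2)^{1/2}\|_{L^q(B_2)}$, and the explicit geometric formula for $\alpha_k$ delivers exactly the $\theta$ in \eqref{def:theta}. If you want to develop your version, you must keep the $F^{p-2}$ weight throughout and replace the per-step H\"older interpolation against $\|F\|_{L^q}$ by Young's absorption, or else rework the bookkeeping so that the accumulated $\|F\|_{L^q}$ exponents are computed with the full $p$-shift.
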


\begin{proof}[Proof of Theorem~\ref{T:1}]
Theorem~\ref{T:1b2} contains the claim of Theorem~\ref{T:1} in the case $n\geq3$ and $2\leq p<q$. The remaining case $n=2$ is contained in \cite[Theorem~2.1]{Mar91} and the statement is classic for $p=q$.
\end{proof}

\begin{proof}[Proof of Theorem~\ref{T:1b2}]
Throughout the proof we write $\lesssim$ if $\leq$ holds up to a positive constant which depends only on $n,m,M,p$ and $q$.

\step 1 One step improvement.

Suppose that $B_2\Subset \Omega$. We claim that for every 
\begin{equation}\label{ass:gamma:0}
 \gamma\in(0,1]\quad\mbox{satisfying}\quad \frac{n-3}{n-1}\leq \gamma
\end{equation}
there exists $c=c(\gamma,n,m,M,p,q)\in[1,\infty)$ such that for every $\frac12\leq \rho<\sigma\leq1$ and any $\alpha\geq2$
\begin{equation}\label{est:iteratestart}
\|\phi_{\alpha+p-2}\|_{W^{1,2}(B_\rho)}^2\leq  c\alpha^2(\sigma-\rho)^{-(1+\frac1\gamma)}\|\phi_{(\alpha+q-2)\gamma}\|_{W^{1,2}(B_\sigma)}^\frac2\gamma,
\end{equation}
where we use the shorthand 
\begin{equation}\label{def:phibeta}
 \phi_\beta:=\sum_{j=1}^n(1+u_{x_j}^2)^\frac{\beta}4\qquad\mbox{for $\beta>0$.}
\end{equation}
Moreover, there exists $c=c(n,m,M,p,q)\in[1,\infty)$ such that for every $0< \rho<\sigma\leq2$ and any $\alpha\geq2$
\begin{equation}\label{est:iteratestart2}
\|\nabla \phi_{\alpha+p-2}\|_{L^{2}(B_\rho)}^2\leq  c\alpha^2(\sigma-\rho)^{-2}\|\phi_{\alpha+q-2}\|_{L^{2}(B_\sigma)}^2.
\end{equation}

\substep{1.1} We claim that there exists $c=c(\gamma,n,q)\in[1,\infty)$ such that for every $k>0$, $\alpha\geq2$, $s\in\{1,\dots,n\}$, and $\frac12\leq \rho<\sigma\leq1$
\begin{align}\label{est:infalphaks}
I_{\alpha,k,s}(\rho,\sigma):=&\inf\biggl\{\int_{B_\sigma}|\nabla \eta|^2 G_{\alpha,k}(u_{x_s})(1+|\nabla u|^2)^{\frac{q-2}2}\,|\,\eta\in C_0^1(B_\sigma),\,\eta=1\mbox{ in $B_\rho$}\biggr\}\notag\\
\leq& c (\sigma-\rho)^{-(1+\frac1\gamma)}\biggl(\frac{1+k^2}{k^2}\biggr)^\frac{\alpha-2}{2}\|\phi_{(q-2+\alpha)\gamma}\|_{W^{1,2}(B_\sigma)}^\frac{2}\gamma.
\end{align}
Assumption $u\in W^{1,q}(B_1)$ and estimate \eqref{est:Galphak1} imply that $v:=G_{\alpha,k}(u_{x_s})(1+|\nabla u|^2)^{\frac{q-2}2}\in L^1(B_1)$. Hence, Lemma~\ref{L:optimcutoff} and \eqref{est:Galphak2} yield for every $\delta\in(0,1]$
\begin{align*}
 I_{\alpha,k,s}(\rho,\sigma)\leq 2(\sigma-\rho)^{-(1+\frac1\delta)}\biggl(\frac{1+k^2}{k^2}\biggr)^\frac{\alpha-2}{2} \biggl(\int_{\rho}^\sigma \left(\int_{S_r} (1+u_{x_s}^2)^\frac{\alpha}2(1+|\nabla u|^2)^\frac{q-2}2\right)^\delta\,dr\biggr)^\frac1\delta.
\end{align*}
Appealing to Young's inequality, we find $c=c(n)\in[1,\infty)$ such that
\begin{equation}\label{eq:Mar91L29}
\sum_{j=1}^n(1+u_{x_j}^2)^\frac{\alpha}2\sum_{j=1}^n(1+u_{x_j}^2)^\frac{q-2}2\leq c\sum_{j=1}^n(1+u_{x_j}^2)^{\frac{\alpha+q-2}2}
\end{equation}
(in fact \eqref{eq:Mar91L29} is valid with $c=1+\frac12n(n-1)$, see  \cite[Lemma~2.9]{Mar91}) and thus
\begin{eqnarray*}
(1+u_{x_s}^2)^\frac{\alpha}2(1+|\nabla u|^2)^\frac{q-2}2&\leq& n^{\max\{\frac{q-2}2-1,0\}}\sum_{j=1}^n(1+u_{x_j}^2)^\frac{\alpha}2\sum_{j=1}^n(1+u_{x_j}^2)^\frac{q-2}2\\
&\stackrel{\eqref{eq:Mar91L29}}{\leq}&cn^{\max\{\frac{q-2}2-1,0\}}\sum_{j=1}^n(1+u_{x_j}^2)^\frac{\alpha+q-2}2\\
&\leq&cn^{\max\{\frac{q-2}2-1,0\}}\phi_{(\alpha+q-2)\gamma}^\frac2\gamma,
\end{eqnarray*}
where in the first inequality we use Jensen's inequality in the case $\frac{q-2}2\geq1$ and the discrete $\ell_s$-$\ell_1$, $s\geq1$ estimate for $\frac{q-2}2\in(0,1)$, and the third inequality is again the discrete $\ell_s$-$\ell_1$, $s\geq1$ estimate. Hence, we find $c=c(n,q)\in[1,\infty)$ such that
\begin{align}\label{est:infalpha1}
 I_{\alpha,k,s}(\rho,\sigma)\leq c(\sigma-\rho)^{-(1+\frac1\delta)}\biggl(\frac{1+k^2}{k^2}\biggr)^\frac{\alpha-2}{2} \left(\int_{\rho}^\sigma \left(\int_{S_r} \phi_{(\alpha+q-2)\gamma}^\frac2\gamma\right)^\delta\,dr\right)^\frac1\delta\qquad\mbox{for all $\delta\in(0,1]$}.
\end{align}
To estimate the right-hand side in \eqref{est:infalpha1} we use the Sobolev inequality on spheres, i.e\ for all $\gamma\in(0,1]$ there exists $c=c(n,\gamma)\in[1,\infty)$ such that for every $r>0$
\begin{equation}\label{ineq:sobd3}
\biggl(\int_{S_r}|\varphi|^\frac2\gamma\biggr)^\frac\gamma2\leq c\biggl(\biggl(\int_{S_r}|\nabla \varphi|^{(\frac2\gamma)_*}\biggr)^\frac1{(\frac2\gamma)_*}+\frac1r\biggl(\int_{S_r}|\varphi|^{(\frac2\gamma)_*}\biggr)^\frac1{(\frac2\gamma)_*}\biggr),\quad\mbox{where }\frac1{(\frac2\gamma)_*}=\frac\gamma2+\frac1{n-1}.
\end{equation}
Estimate \eqref{ineq:sobd3} and assumption \eqref{ass:gamma:0} in the form $\frac1{(\frac2\gamma)_*}=\frac\gamma2+\frac1{n-1}\stackrel{\eqref{ass:gamma:0}}{\geq} \frac{n-3}{2(n-1)}+\frac1{n-1}\geq\frac12$ yield
\begin{align}\label{est:J2}
&\left(\int_{\rho}^\sigma \left(\int_{S_r} \phi_{(\alpha+q-2)\gamma}^\frac2\gamma\right)^\delta\,dr\right)^\frac1\delta\notag\\
\leq& c \biggl(\int_{\rho}^\sigma  \biggl[\left(\int_{S_r} |\nabla\phi_{(\alpha+q-2)\gamma}|^{(\frac2\gamma)_*}\right)^{\frac{1}{(\frac2\gamma)_*}}+\frac1r\left(\int_{S_r} \phi_{(\alpha+q-2)\gamma}^{(\frac2\gamma)_*}\right)^{\frac{1}{(\frac2\gamma)_*}}\biggr]^{\delta \frac2\gamma}\,dr\biggr)^\frac1\delta\notag\\
\leq&c\biggl(\int_{\rho}^\sigma |S_r|^{(\frac{1}{(\frac2\gamma)_*}-\frac12)\frac{2\delta}{\gamma}} \biggl[\left(\int_{S_r} |\nabla \phi_{(\alpha+q-2)\gamma}|^{2}\right)^{\frac{1}{2}}+\frac1r\left(\int_{S_r} \phi_{(\alpha+q-2)\gamma}^2\right)^{\frac{1}{2}}\biggr]^{\delta\frac2\gamma }\,dr\biggr)^\frac1\delta,
\end{align}
where $c=c(\gamma,n)\in[1,\infty)$. Combining \eqref{est:infalpha1} and \eqref{est:J2} with the choice $\delta=\gamma$, we obtain the claimed estimate \eqref{est:infalphaks} (we can ignore the factors $|S_r|$ and $\frac1r$ in \eqref{est:J2} by assumption $\frac12\leq\rho<\sigma\leq1$).

\substep{1.2} Proof of \eqref{est:iteratestart}. Lemma~\ref{L:Mar} and estimate \eqref{est:infalphaks} yield for every $s\in\{1,\dots,n\}$
\begin{equation*}
\int_{B_\rho}g_{\alpha,k}'(u_{x_s})(1+|\nabla u|^2)^{\frac{p-2}2}|\nabla u_{x_s}|^2\,dx\leq c (\sigma-\rho)^{-(1+\frac1\gamma)}\biggl(\frac{1+k^2}{k^2}\biggr)^\frac{\alpha-2}{2}\|\phi_{(q-2+\alpha)\gamma}\|_{W^{1,2}(B_\sigma)}^\frac{2}\gamma,
\end{equation*}
where $c=c(\gamma,n,m,M,p,q)\in[1,\infty)$. Sending $k$ to infinity and summing over $s$ from $1$ to $n$, we obtain (using $\lim_{k\to\infty}g_{\alpha,k}'(t)\geq(1+t^2)^\frac{\alpha-2}2$)
\begin{equation*}
\int_{B_\rho}\sum_{j=1}^n(1+u_{x_j}^2)^{\frac{\alpha+p-4}2}|\nabla u_{x_j}|^2\,dx\leq c (\sigma-\rho)^{-(1+\frac1\gamma)}\|\phi_{(q-2+\alpha)\gamma}\|_{W^{1,2}(B_\sigma)}^\frac{2}\gamma.
\end{equation*}
Combining the above estimate with the pointwise inequality
\begin{equation}\label{est:nablaphiap2}
|\nabla \phi_{\alpha+p-2}|\leq\frac{\alpha+p-2}2\sum_{j=1}^n(1+u_{x_j}^2)^\frac{\alpha+p-4}4|\nabla u_{x_j}|
\end{equation}
we obtain that there exists $c=c(\gamma,n,m,M,p,q)\in[1,\infty)$ such that for all $\frac12\leq\rho<\sigma\leq1$ and $\alpha\geq2$
\begin{equation}\label{est:T1:s1:1}
\|\nabla \phi_{\alpha+p-2}\|_{L^2(B_\rho)}^2\leq c \alpha^2(\sigma-\rho)^{-(1+\frac1\gamma)}\|\phi_{(q-2+\alpha)\gamma}\|_{W^{1,2}(B_\sigma)}^\frac{2}\gamma.
\end{equation}
It remains to estimate $\|\phi_{\alpha+p-2}\|_{L^2(B_\rho)}$. For this, we use a version of the Poincar\'e inequality: For every $\e>0$ there exists $c=c(\e,n)\in[1,\infty)$ such that for all $r>0$ and $v\in H^1(B_r)$
\begin{equation}\label{ineq:poincareeps}
\left(\fint_{B_r}|v|^2\right)^\frac12\leq c\left(r\left(\fint_{B_r}|\nabla v|^2\right)^\frac12+\left(\fint_{B_r}|v|^\e\right)^\frac1{\e}\right).
\end{equation}
We recall a proof of \eqref{ineq:poincareeps} at the end of this step. Inequality \eqref{ineq:poincareeps} with $v=\phi_{\alpha+p-2}$ and $\e=2\gamma \frac{\alpha+q-2}{\alpha+p-2}$ together with the inequality 
$$1\leq \phi_{\alpha+p-2}\leq n^{\max\{0,1-\frac{\alpha+p-2}{(\alpha+q-2)\gamma)}\}}\phi_{(\alpha+q-2)\gamma}^{\frac1\gamma\frac{\alpha+p-2}{\alpha+q-2}}$$
(the second inequality follows by Jensens inequality if $\frac{(\alpha+q-2)\gamma}{\alpha+p-2}\geq1$ and the discrete $\ell_s-\ell_1$, $s\geq1$ inequality otherwise) yield
\begin{equation}\label{est:phiap2}
\|\phi_{\alpha+p-2}\|_{L^2(B_\rho)}^2\leq c\left(\|\nabla \phi_{\alpha+p-2}\|_{L^2(B_\rho)}^2+\|\phi_{(\alpha+q-2)\gamma}\|_{L^2(B_\rho)}^{\frac2{\gamma}\frac{\alpha+p-2}{\alpha+q-2}}\right),
\end{equation}
where $c=c(n,\gamma,p,q)\in[1,\infty)$ (note that $\rho\in[\frac12,1]$ and $\e\in [2\gamma,\frac{q}p2\gamma]$). The first term on the right-hand side in \eqref{est:phiap2} can be estimated by \eqref{est:T1:s1:1} and the second term (using $p\leq q$ and $\phi_\beta\geq1$ for all $\beta>0$) by
\begin{equation}\label{est:phiap3}
\|\phi_{(\alpha+q-2)\gamma}\|_{L^2(B_\rho)}^{\frac2{\gamma}\frac{\alpha+p-2}{\alpha+q-2}}\leq c\|\phi_{(\alpha+q-2)\gamma}\|_{L^2(B_\rho)}^{\frac2{\gamma}}.
\end{equation}
A combination of \eqref{est:T1:s1:1}, \eqref{est:phiap2} and \eqref{est:phiap3} yield \eqref{est:iteratestart}.

Finally, we recall an argument for \eqref{ineq:poincareeps}: Clearly it suffices to proof the statement for $r=1$. Given $\e>0$, set
\begin{equation*}
U_\e:=\{x\in B_1\,:\,|v(x)|\leq\lambda_\e\}\qquad\mbox{where}\quad \lambda_\e:=\left(2\fint_{B_1} |v|^\e\right)^\frac1\e.
\end{equation*}
The choice of $\lambda_\e$ and the Markov inequality yield
\begin{equation*}
|B_1\setminus U_\e|\leq \lambda^{-\e}\int_{B_1}|v|^\e\leq \frac12|B_1|
\end{equation*}
and thus $|U_\e|\geq\frac12|B_1|$. Hence, by a suitable version of the Poincar\'e inequality, see e.g.\ \cite[eq.(7.45) p.~164]{GT}, there exists $c=c(n)\in[1,\infty)$ such that
\begin{equation*}
\int_{B_1}|v-\fint_{U_\e}v|^2\leq c\int_{B_1}|\nabla v|^2.
\end{equation*}
The above inequality, the triangle inequality and
\begin{equation*}
 \fint_{U_\e}|v|\leq 2\lambda_\e^{1-\e}\fint_{B_1}|v|^\e\leq 2^{\frac1\e}\left(\fint_{B_1}|v|^\e\right)^\frac1\e
\end{equation*}
imply \eqref{ineq:poincareeps}.

\substep{1.3} Proof of \eqref{est:iteratestart2}. This estimate is an intermediate step in the proof of \cite[Lemma~2.10]{Mar91}, but for completeness we recall the argument. Lemma~\ref{L:Mar} with $\eta$ being the affine cutoff function for $B_\rho$ in $B_\sigma$ yields for every $s\in\{1,\dots,n\}$
\begin{equation*}
\int_{B_\rho}g_{\alpha,k}'(u_{x_s})(1+|\nabla u|^2)^{\frac{p-2}2}|\nabla u_{x_s}|^2\,dx\lesssim (\sigma-\rho)^{-2}\int_{B_\sigma}G_{\alpha_k}(u_{x_s})(1+|\nabla u|^2)^{\frac{q-2}2}\,dx
\end{equation*}
and by summing $s$ from $1$ to $n$ and sending $k\to\infty$, we obtain
\begin{equation}\label{est:iteratestart2pf}
\int_{B_\rho}\sum_{j=1}^n(1+u_{x_j}^2)^{\frac{\alpha+p-4}2}|\nabla u_{x_j}|^2\,dx\lesssim  (\sigma-\rho)^{-2}\int_{B_\sigma}\phi_{\alpha+q-2}^2.
\end{equation}
Estimate \eqref{est:iteratestart2} is a consequence of \eqref{est:nablaphiap2} and \eqref{est:iteratestart2pf}.

\step{2} Iteration. Fix $\theta$ as in \eqref{def:theta}. We claim that there exists $c=c(n,m,M,p,q,\theta)\in[1,\infty)$ such that
\begin{equation}\label{est:T1:claim:s2}
\|(1+|\nabla u|^2)^\frac12\|_{L^\infty(B_\frac12)}\leq c \|(1+|\nabla u|^2)^\frac12\|_{L^q(B_2)}^\theta.
\end{equation}
Set
\begin{equation}\label{def:gammaiterate}
\gamma=\frac{n-3}{n-1}\quad\mbox{if $n\geq4$ and}\quad\gamma=\frac{\frac{p}q\theta-1}{\theta-1}\quad\mbox{if $n=3$}.
\end{equation}
%
%
Note that the assumptions $p<q$ and $\theta>\frac{q}p$ yield
\begin{equation}\label{def:gammaiteraten3}
0<\gamma<\frac{p}q\quad\mbox{if $n=3$}.
\end{equation}
We define a sequence $\{a_k\}_{k\in{\mathbb N}_0}$ by
\begin{equation*}
  \alpha_0:=2,\qquad \alpha_k:=\frac{1}\gamma(\alpha_{k-1}+p-2)-(q-2)\quad\mbox{for all $k\in\mathbb N$}.
\end{equation*}
By induction one sees that 
\begin{equation*}
\alpha_k=2+\left(\frac{p}\gamma-q\right)\sum_{i=0}^{k-1}\gamma^{-i}=2+\left(\frac{p}\gamma-q\right)\frac{\gamma^{-k}-1}{\gamma^{-1}-1}=2+p\frac{\gamma^{-k}-1}{1-\gamma}\left(1-\gamma\frac{q}p\right)\qquad\mbox{for all $k\in\mathbb N$}.
\end{equation*}
The choice of $\gamma$ in \eqref{def:gammaiterate}, assumption \eqref{eq:assq}, and \eqref{def:gammaiteraten3} together with $p<q$ imply  $1-\gamma \frac{q}p>0$ and $\gamma^{-1}>1$, hence
\begin{equation*}
\alpha_k\to\infty\qquad\mbox{as $k\to\infty$}.
\end{equation*}
For $k\in\mathbb N$, set $\rho_k=\frac12+\frac1{2^{k+1}}$, $\sigma_k:=\rho_k+\frac1{2^{k+1}}=\rho_{k-1}$ (where $\rho_0:=1$), and 
\begin{equation*}
 A_k:= \|\phi_{\alpha_k+p-2}\|_{ W^{1,2}(B_{\rho_k})}^\frac2{\alpha_k+p-2}\quad \mbox{for all $k\in\mathbb N_0$,}
\end{equation*}
where $\phi_\beta$, $\beta\geq0$ is defined in \eqref{def:phibeta}. Since $\alpha_{k-1}+p-2=(\alpha_k+q-2)\gamma$, estimate \eqref{est:iteratestart} for $\alpha=\alpha_k$ implies 
\begin{equation*}
A_k\leq \left(c2^{(k+1)(1+\frac1\gamma)}\alpha_k^2\right)^\frac1{\alpha_k+p-2} A_{k-1}^{\frac{1}{\gamma}\frac{\alpha_{k-1}+p-2}{\alpha_k+p-2}}\quad\mbox{for every $k\in\mathbb N$},
\end{equation*}
where $c=c(\gamma,n,m,M,p,q)\in[1,\infty)$ as in \eqref{est:iteratestart} and thus by iteration
\begin{align}\label{est:T1:d4:s2:1}
A_k\leq  A_0^{\gamma^{-k}\prod_{i=1}^k\frac{\alpha_{i-1}+p-2}{\alpha_i+p-2}}\prod_{i=1}^k \left(c2^{(i+1)(1+\frac1\gamma)}\alpha_i^2\right)^\frac1{\alpha_i+p-2}. 
\end{align}
Note that for every $k\in\mathbb N$
\begin{equation*}
\prod_{i=1}^k \left(c2^{(i+1)(1+\frac1\gamma)}\alpha_i^2\right)^\frac1{\alpha_i+p-2}\leq\exp\biggl(\sum_{i=1}^\infty \frac{\log\big(c2^{(i+1)(1+\frac1\gamma)}\alpha_i^2\big)}{\alpha_i+p-2}\biggr)=c(\gamma,n,m,M,p,q)<\infty
\end{equation*}
and
\begin{align*}
\gamma^{-k}\prod_{i=1}^k\frac{\alpha_{i-1}+p-2}{\alpha_i+p-2}=&\gamma^{-k}\frac{\alpha_0+p-2}{\alpha_k+p-2}\\
=&\gamma^{-k}\frac{p}{p\frac{\gamma^{-k}-1}{1-\gamma}\left(1-\gamma\frac{q}p\right)+p}\\
=&\left(\frac{\gamma^{-k}}{\gamma^{-k}-1}\right)\biggl(\frac{1-\gamma}{1-\gamma\frac{q}{p}+\frac{1-\gamma}{\gamma^{-k}-1}}\biggr).
\end{align*}
Hence, sending $k\to\infty$ in \eqref{est:T1:d4:s2:1}, we obtain that there exists $c=c(n,m,M,p,q,\theta)\in[1,\infty)$ (note $\gamma=\gamma(n,p,q,\theta)<1$) such that
\begin{equation}\label{est:iteration:final1}
\|(1+|\nabla u|^2)^\frac12\|_{L^\infty(B_\frac12)}\leq c A_0^{\frac{1-\gamma}{1-\gamma\frac{q}{p}}}= c\|\phi_p\|_{W^{1,2}(B_1)}^\frac{2(1-\gamma)}{p-\gamma q}.
\end{equation}
Estimate \eqref{est:iteratestart2} and $2\leq p\leq q$ together with $\phi_\beta\geq1$ for all $\beta\geq0$ yield
\begin{equation}\label{est:iteration:final2}
\|\phi_p\|_{W^{1,2}(B_1)}^\frac{2(1-\gamma)}{p-\gamma q}\lesssim \|\phi_q\|_{L^2(B_2)}^\frac{2(1-\gamma)}{p-\gamma q}\lesssim \|(1+|\nabla u|^2)^\frac12\|_{L^q(B_2)}^{\frac{q}{p}\frac{1-\gamma}{1-\gamma\frac{q}p}}.
\end{equation}
Estimates \eqref{est:iteration:final1}, \eqref{est:iteration:final2} and the choice of $\gamma$ in \eqref{def:gammaiterate} imply \eqref{est:T1:claim:s2}.

\step 3 Conclusion. Fix $\rho\in(0,1)$ and $B_R(x_0)\Subset\Omega$. By scaling and translation, we deduce from Step~2 that
\begin{equation}\label{est:T1:claim:s2b}
\|(1+|\nabla u|^2)^\frac12\|_{L^\infty(B_\frac{R}4(x_0))}\leq c R^{-n\frac{\theta}q}\|(1+|\nabla u|^2)^\frac12\|_{L^q(B_{R}(x_0))}^\theta,
\end{equation}
where $c=c(n,m,M,p,q,\theta)\in[1,\infty)$ is the same as in \eqref{est:T1:claim:s2}. Applying for every $y\in B_{\rho R}(x_0)$ estimate \eqref{est:T1:claim:s2b} with $B_R(x_0)$ replaced by $B_{(1-\rho)R}(y)\subset\Omega$, we obtain
\begin{equation*}
 \|(1+|\nabla u|^2)^\frac12\|_{L^\infty(B_{\frac{1-\rho}4R}(y))}\leq  c ((1-\rho)R)^{-n\frac{\theta}q}\|(1+|\nabla u|^2)^\frac12\|_{L^q(B_R(x_0))}^\theta
\end{equation*}
and thus the claimed estimate \eqref{est:T:1b2} follows.
\smallskip

\end{proof}

By the same interpolation argument as in \cite[Theorem~3.1]{Mar91}, we deduce from Theorem~\ref{T:1b2} 
\begin{corollary}\label{C:1b}
Let $\Omega\subset \R^n$, $n\geq3$ and suppose Assumption~\ref{ass:1} is satisfied with $2\leq p< q<\infty$ such that \eqref{eq:assq2}. Let $\theta$ be given as \eqref{def:theta} with the additional constrain $\theta<\frac{q}{q-p}$ for $n=3$ and set
\begin{equation}\label{def:alpha}
  \alpha:=\frac{\theta\frac{p}q}{1-\theta(1-\frac{p}q)}.
\end{equation} 
Let $u\in W_{\rm loc}^{1,q}(\Omega)$ be a local minimizer of the functional $\mathcal F$ given in \eqref{eq:int}. Then, there exists $c=c(n,m,M,p,q,\theta)\in[1,\infty)$ such that for every $B_{2R}(x_0)\Subset \Omega$  
\begin{equation}\label{est:C:1b1}
\|(1+|\nabla u|^2)^\frac12\|_{L^\infty(B_{\frac{R}2}(x_0))}\leq c R^{-n\frac{\alpha}{p}}\|(1+|\nabla u|^2)^\frac12\|_{L^p(B_R(x_0))}^\alpha.
\end{equation} 
\end{corollary}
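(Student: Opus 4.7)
The plan is to combine Theorem~\ref{T:1b2} with a log-convex interpolation between $L^p$ and $L^\infty$, followed by Young's inequality and a standard absorption/iteration on concentric balls, mirroring the argument in \cite[Theorem~3.1]{Mar91}.

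First I would verify the compatibility of the exponents. Assumption \eqref{eq:assq2} implies \eqref{eq:assq} when $n\geq 4$ (since $\tfrac{2}{n-1}\leq\tfrac{2}{n-3}$), so Theorem~\ref{T:1b2} is applicable with the stated $\theta$. An elementary computation then shows that, combined with the additional constraint $\theta<\tfrac{q}{q-p}$ for $n=3$, \eqref{eq:assq2} is precisely equivalent to $\theta(1-\tfrac{p}{q})<1$; this is exactly what is needed for $\alpha$ in \eqref{def:alpha} to be positive and finite, and it produces the identity $\tfrac{n\alpha}{p}=\tfrac{n\theta}{q}\cdot\tfrac{1}{1-\theta(1-p/q)}$ that matches the scaling in \eqref{est:C:1b1}.

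Next, writing $v:=(1+|\nabla u|^2)^{1/2}$, I would fix $B_{2R}(x_0)\Subset\Omega$ and pick arbitrary radii $R/2\leq r<s\leq R$. Applying Theorem~\ref{T:1b2} on $B_s(x_0)$ with $\rho=r/s$ yields
$$\|v\|_{L^\infty(B_r(x_0))}\lesssim (s-r)^{-n\theta/q}\|v\|_{L^q(B_s(x_0))}^{\theta}.$$
Then I would use the log-convex interpolation $\|v\|_{L^q(B_s)}\leq\|v\|_{L^p(B_s)}^{p/q}\|v\|_{L^\infty(B_s)}^{1-p/q}$ and apply Young's inequality with the conjugate pair $\bigl(\tfrac{1}{1-\theta(1-p/q)},\tfrac{1}{\theta(1-p/q)}\bigr)$ to absorb the $L^\infty$-factor. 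For every $\varepsilon\in(0,1)$ this gives
$$\|v\|_{L^\infty(B_r(x_0))}\leq \varepsilon\,\|v\|_{L^\infty(B_s(x_0))}+c_\varepsilon\,(s-r)^{-n\alpha/p}\|v\|_{L^p(B_R(x_0))}^{\alpha},$$
where the exponent on the $L^p$-norm coincides with the $\alpha$ of \eqref{def:alpha} and the power of $s-r$ is obtained from the computation in the previous paragraph.

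Finally, I would fix $\varepsilon=\tfrac12$ and invoke the standard iteration lemma: if $\Phi:[R/2,R]\to[0,\infty)$ is bounded and satisfies $\Phi(r)\leq\tfrac12\Phi(s)+A(s-r)^{-\beta}$ for all $R/2\leq r<s\leq R$, then $\Phi(R/2)\leq C(\beta)A R^{-\beta}$. Applied to $\Phi(r):=\|v\|_{L^\infty(B_r(x_0))}$, which is finite on $[R/2,R]$ thanks to Theorem~\ref{T:1b2}, this produces \eqref{est:C:1b1}. The main obstacle is the bookkeeping of exponents, in particular confirming that the stricter condition \eqref{eq:assq2} is exactly what makes the Young exponent $1/(1-\theta(1-p/q))$ finite and positive and that the resulting power of $(s-r)$ matches $-n\alpha/p$; once this algebra is verified the remainder of the proof is routine.
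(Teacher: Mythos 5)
Your proposal is correct, and it reaches the same conclusion as the paper via a genuinely different (though equally standard) mechanism. The paper works with a dyadic sequence of radii $\rho_\nu=1-2^{-(1+\nu)}$, obtains the recursive estimate
\begin{equation*}
\|v\|_{L^\infty(B_{\rho_{\nu-1}})}\lesssim 2^{(1+\nu)n\theta/q}\,\|v\|_{L^p(B_1)}^{\theta p/q}\,\|v\|_{L^\infty(B_{\rho_\nu})}^{(1-p/q)\theta},
\end{equation*}
iterates it explicitly a finite number of times $\hat\nu$, and handles the escaping $L^\infty$-factor by replacing $\|v\|_{L^\infty(B_1)}$ with $\|v\|_{L^q(B_2)}^\theta$ (finite since $u\in W^{1,q}_{\rm loc}$) and then taking $\hat\nu$ large enough that the exponent $\theta((1-p/q)\theta)^{\hat\nu}$ makes the last factor $\leq 2$. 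You instead split the product with Young's inequality (conjugate exponents $1/(\theta(1-p/q))$ and $1/(1-\theta(1-p/q))$, legitimate precisely because $\theta(1-p/q)\in(0,1)$) to get the linear absorption inequality $\Phi(r)\leq\varepsilon\Phi(s)+c_\varepsilon(s-r)^{-n\alpha/p}\|v\|_{L^p(B_R)}^\alpha$, and then invoke the standard iteration/hole-filling lemma. The exponent bookkeeping checks out: $\tfrac{\theta p/q}{1-\theta(1-p/q)}=\alpha$ and $\tfrac{n\theta/q}{1-\theta(1-p/q)}=\tfrac{n\alpha}{p}$. The one place to be careful is the applicability of the iteration lemma, which requires $\Phi(r)=\|v\|_{L^\infty(B_r(x_0))}$ to be finite on $[R/2,R]$; as you note, this follows from Theorem~\ref{T:1b2} applied on a ball strictly between $B_R(x_0)$ and $B_{2R}(x_0)$, which plays exactly the role the paper's finite-$\hat\nu$ truncation plays. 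Your route is slightly more streamlined because it imports the absorption lemma wholesale; the paper's is more self-contained but does the same work by hand.
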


\begin{remark}
A direct calculation yields
\begin{equation*}
 \alpha=\frac{2p}{(n+1)p-(n-1)q}\quad\mbox{if $n\geq4$.} 
\end{equation*}
For $n=3$, the assumption on $\theta$ in Corollary~\ref{C:1b} reads $\theta\in(\frac{q}p,\frac{q}{q-p})$. Since $2\leq p<q$, we have
$$
 \frac{q}p<\frac{q}{q-p}\quad\Leftrightarrow\quad \frac{q}p<2,
$$
where the second inequality is ensured by \eqref{eq:assq2} (for $n=3$).
\end{remark}

\begin{proof}[Proof of Corollary~\ref{C:1b}]

We prove the statement for $x_0=0$ and $R=1$, the general claim follows by scaling and translation. Throughout the proof we write $\lesssim$ if $\leq$ holds up to a positive constant which depends only on $n,m,M,p,q$ and $\theta$. 

For $\nu\in\mathbb N\cup\{0\}$, we set $\rho_\nu=1- \frac{1}{2^{1+\nu}}$. Combining the elementary interpolation inequality
\begin{equation}\label{est:interpolate}
\|(1+|\nabla u|^2)^\frac12\|_{L^{q}(B_{\rho_\nu})}\leq \|(1+|\nabla u|^2)^\frac12\|_{L^{p}(B_{\rho_\nu})}^\frac{p}q\|(1+|\nabla u|^2)^\frac12\|_{L^{\infty}(B_{\rho_\nu})}^{1-\frac{p}q}
\end{equation}
with estimate \eqref{est:T:1b2}, we obtain for every $\nu\in\mathbb N$
\begin{eqnarray}\label{est:refine:iterate}
\|(1+|\nabla u|^2)^\frac12\|_{L^\infty(B_{\rho_{\nu-1}})}&\stackrel{\eqref{est:T:1b2}}{\lesssim}&(1-\tfrac{\rho_{\nu-1}}{\rho_{\nu}})^{-n\frac{\theta}q}\|(1+|\nabla u|^2)^\frac12\|_{L^{q}(B(\rho_{\nu}))}^\theta\notag\\
&\stackrel{\eqref{est:interpolate}}{\leq}&(1-\tfrac{\rho_{\nu-1}}{\rho_{\nu}})^{-n\frac{\theta}q}\|(1+|\nabla u|^2)^\frac12\|_{ L^{p}(B(\rho_\nu)}^{\frac{p}q\theta}\|(1+|\nabla u|^2)^\frac12\|_{L^{\infty}(B(\rho_{\nu}))}^{(1-\frac{p}q)\theta}\notag\\
&\leq&c2^{(1+\nu)n\frac{\theta}q}\|(1+|\nabla u|^2)^\frac12\|_{ L^{p}(B_1)}^{\frac{p}q\theta}\|(1+|\nabla u|^2)^\frac12\|_{L^{\infty}(B_{\rho_{\nu}})}^{(1-\frac{p}q)\theta},
\end{eqnarray}
where $c=c(n,n,m,M,p,q,\theta)\in[1,\infty)$. Iterating \eqref{est:refine:iterate} from $\nu=1$ to $\hat\nu$, we obtain
\begin{eqnarray}\label{est:moser:almostfinal}
& &\|(1+|\nabla u|^2)^\frac12\|_{L^{\infty}(B_{\frac12})}=\| (1+|\nabla u|^2)^\frac12\|_{L^{\infty}(B_{\rho_{0}})}\notag\\
&\stackrel{\eqref{est:refine:iterate}}{\leq}& 2^{n\frac{\theta}q \sum_{\nu=0}^{\hat \nu-1}(\nu+1)((1-\gamma)\theta)^\nu}\left(c\|(1+|\nabla u|^2)^\frac12\|_{ L^{p}(B_1)}^{\frac{p}q\theta}\right)^{\sum_{\nu=0}^{\hat \nu-1}((1-\frac{p}q)\theta)^\nu}\|(1+|\nabla u|^2)^\frac12\|_{L^{\infty}(B_1)}^{((1-\frac{p}q)\theta)^{\hat \nu}}.
\end{eqnarray}
The choice of $\theta$ and assumption \eqref{eq:assq2} imply
\begin{equation}\label{eq:theta}
0<\left(1-\frac{p}q\right)\theta<1.
\end{equation}
Indeed, \eqref{eq:theta} is ensured for $n=3$ by the assumption $\theta<\frac{q}{q-p}$ and for $n\geq4$ by
$$
0<\left(1-\frac{p}q\right)\theta\stackrel{\eqref{def:theta}}{=}\frac{2(q-p)}{(n-1)p-(n-3)q}=1-\frac{(n+1)p-(n-1)q}{(n-1)p-(n-3)q}\stackrel{\eqref{eq:assq2}}{<}1.
$$
Hence, $\sum_{\nu=0}^\infty(\nu+1)((1-\frac{p}q)\theta)^\nu\lesssim 1$ and $\sum_{\nu=0}^{\infty}((1-\frac{p}q)\theta)^\nu=\frac1{1-\theta(1-\frac{p}q)}$. Thus, estimates \eqref{est:T:1b2} and \eqref{est:moser:almostfinal} yield for every $\hat\nu\in\mathbb N$
\begin{align*}
\|(1+|\nabla u|^2)^\frac12\|_{L^{\infty}(B_\frac12)}\stackrel{\eqref{est:moser:almostfinal}}{\lesssim}&\|(1+|\nabla u|^2)^\frac12\|_{L^{p}(B_1)}^{\frac{\theta\frac{p}q}{1-\theta(1-\frac{p}q)}}\|(1+|\nabla u|^2)^\frac12\|_{L^{\infty}(B_1)}^{((1-\frac{p}q)\theta)^{\hat \nu}}\\
\stackrel{\eqref{est:T:1b2}}{\lesssim}&\|(1+|\nabla u|^2)^\frac12\|_{L^{p}(B_1)}^{\alpha}\|(1+|\nabla u|^2)^\frac12\|_{L^{q}(B_2)}^{\theta((1-\frac{p}q)\theta)^{\hat \nu}}.
\end{align*}
Assumptions $u\in W_{\rm loc}^{1,q}(\Omega)$ and $B_2\Subset\Omega$ imply $\|(1+|\nabla u|^2)^\frac12\|_{L^{q}(B_2)}<\infty$ and thus we find $\hat \nu\in\mathbb N$ such that $\|(1+|\nabla u|^2)^\frac12\|_{L^{q}(B_2)}^{\theta((1-\frac{p}q)\theta)^{\hat \nu}}\leq2$ which finishes the proof.
\end{proof}

\section{Proof of Theorem~\ref{T:2}}\label{sec:T:2}
The main result of this section is
\begin{theorem}\label{T:2b}
 Let $\Omega\subset \R^n$, $n\geq3$ and suppose Assumption~\ref{ass:1} is satisfied with $2\leq p< q<\infty$ such that \eqref{eq:assq2}. Let $\theta$ be given as \eqref{def:theta} with the additional constrain $\theta<\frac{q}{q-p}$ for $n=3$. Let $u\in W_{\rm loc}^{1,1}(\Omega)$ be a local minimizer of the functional $\mathcal F$ given in \eqref{eq:int}. Then, there exists $c=c(n,m,M,p,q,\theta)\in[1,\infty)$ such that for every $B_{2R}(x_0)\Subset \Omega$  
\begin{equation*}
\|\nabla u\|_{L^\infty(B_\frac{R}2(x_0))}\leq c \left(\fint_{B_R(x_0)} f(\nabla u)\,dx\right)^\frac{\alpha}{p}+1,
\end{equation*} 
where $\alpha$ is given in \eqref{def:alpha}.

\end{theorem}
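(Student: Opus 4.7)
Since Corollary~\ref{C:1b} is an a priori estimate that presupposes $u \in W^{1,q}_{\rm loc}$, whereas here $u$ is merely in $W^{1,1}_{\rm loc}$, my plan is an approximation argument in the spirit of \cite{ELM99}: construct a two-parameter family of regularized Dirichlet minimizers, apply Corollary~\ref{C:1b} to them with constants uniform in the parameters, and pass to the limit. Fix $B_{2R}(x_0) \Subset \Omega$ and, without loss of generality, $x_0=0$. From $f(z) \geq m|z|^p$ and $f(\nabla u) \in L^1_{\rm loc}$ I immediately get $u \in W^{1,p}_{\rm loc}(\Omega)$; this mild upgrade will be needed to control energies in the limit.

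The regularization I would use: pick $\rho > 2R$ with $B_\rho \Subset \Omega$; for small $\varepsilon>0$, set $u_\varepsilon := u * \rho_\varepsilon \in C^\infty(\overline{B_\rho})$ (so in particular $u_\varepsilon \in W^{1,q}(B_\rho)$, with norm depending on $\varepsilon$ but harmlessly so); for $\delta \in (0,1]$, set $f_\delta(z) := f(z) + \delta(1+|z|^2)^{q/2}$, which satisfies Assumption~\ref{ass:1} with constants $m$ and $M+C\delta$ \emph{uniformly} in $\delta$, and has genuine $q$-coercivity. Let $w_{\delta,\varepsilon} \in u_\varepsilon + W^{1,q}_0(B_\rho)$ be the unique minimizer of $v \mapsto \int_{B_\rho} f_\delta(\nabla v)\,dx$ (existence by the direct method, uniqueness by strict convexity of $f_\delta$ from Assumption~\ref{ass:1}). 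Since $w_{\delta,\varepsilon}\in W^{1,q}(B_\rho)$ is a local minimizer of the $f_\delta$-functional on $B_\rho$ and $B_{2R}\Subset B_\rho$, Corollary~\ref{C:1b} applied to $f_\delta$ yields
\begin{equation*}
\|\nabla w_{\delta,\varepsilon}\|_{L^\infty(B_{R/2})} \leq c\left(\tfrac{1}{|B_R|}\int_{B_R}(1+|\nabla w_{\delta,\varepsilon}|^2)^{p/2}\,dx\right)^{\alpha/p}
\end{equation*}
with $c=c(n,m,M,p,q,\theta)$ independent of $\delta,\varepsilon$. Minimality against $u_\varepsilon$ combined with $f \geq m|\cdot|^p$ and Jensen's inequality for the convolution will control the right-hand side by a uniform multiple of $1 + \tfrac{1}{|B_R|}\int_{B_\rho}f(\nabla u)$, modulo the $\delta$-correction $\delta\int(1+|\nabla u_\varepsilon|^2)^{q/2}$, which is finite for fixed $\varepsilon$ and vanishes as $\delta\to0$.

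It then remains to send first $\delta \to 0$ (fixing $\varepsilon$) and then $\varepsilon \to 0$. The uniform $L^\infty$-bound on $\nabla w_{\delta,\varepsilon}$ over $B_{R/2}$ together with a uniform $L^p$-bound on $B_\rho$ allows extraction of subsequential weak-$*$ limits in $W^{1,\infty}(B_{R/2})$ and weak limits in $W^{1,p}(B_\rho)$, retaining the correct boundary data at each stage. By standard lower semicontinuity and the strict convexity of $f$, the final limit $w$ is the unique local minimizer of $\mathcal F$ on $B_\rho$ with boundary data $u$, hence $w=u$, and the uniform gradient bound transfers to $\nabla u$. Sending $\rho\downarrow 2R$ leaves the estimate with $\int_{B_{2R}}f(\nabla u)$ in place of $\int_{B_R}f(\nabla u)$; a short covering argument, writing $B_{R/2}(x_0)$ as a finite union of balls $B_{R/8}(y_i)$ with $B_{R/2}(y_i)\subset B_R(x_0)$ and applying the preceding bound at each $y_i$, replaces $\int_{B_{2R}}$ by $\int_{B_R}$ at the cost of an absolute multiplicative constant. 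The main obstacle I anticipate is precisely the identification $w=u$ in the double limit: it relies on strict convexity together with the uniform $W^{1,\infty}$-bound on the approximants, which closes any Lavrentiev-type gap between $W^{1,p}$- and $W^{1,q}$-energies and ensures compatibility of the minimality of $u$ with that inherited in the limit.
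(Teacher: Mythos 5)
Your proposal is correct and follows essentially the same two-parameter approximation scheme as the paper: perturb the integrand to gain $q$-coercivity (the paper uses $f_\sigma=f+\sigma|\xi|^q$ rather than your $f+\delta(1+|z|^2)^{q/2}$, but this is immaterial), mollify $u$ to get $W^{1,q}$ boundary data, apply Corollary~\ref{C:1b} to the regularized Dirichlet minimizers with constants uniform in the parameters, pass to the limit in the same order (integrand perturbation first, then mollification), and identify the limit with $u$ via strict convexity and the energy comparison. The paper normalizes to $B_2\Subset\Omega$ and invokes standard scaling, translation and covering at the end rather than working directly on $B_\rho$ with $\rho>2R$, but this is only a presentational difference.
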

\begin{proof}[Proof of Theorem~\ref{T:2}] 
 Theorem~\ref{T:2b} contains the claim of Theorem~\ref{T:2} in the case $n\geq3$ and $2\leq p<q$. The remaining case $n=2$ follows from a combination of \cite[Theorem~2.1]{Mar91} with \cite[Theorem~2.1]{ELM99}, and the result is classic for $p=q$.
\end{proof} 

Appealing to the a priori estimate of Corollary~\ref{C:1b}, the statement of Theorem~\ref{T:2b} follows with by now well-established approximation arguments. Below, we present a proof of Theorem~\ref{T:2b} that closely follows \cite[proof of Theorem 2.1, Step~3]{ELM99}.

\begin{proof}[Proof of Theorem~\ref{T:2b}]

Throughout the proof we write $\lesssim$ if $\leq$ holds up to a positive constant which depends only on $n,m,M,p,q$ and $\theta$.

We assume $B_2\Subset \Omega$ and show
\begin{equation}\label{est:reduceT2b}
 \|\nabla u\|_{L^\infty(B_\frac{1}8)}\lesssim\left(\fint_{B_1}f(\nabla u)\,dx+1\right)^\frac{\alpha}p.
\end{equation}
Clearly the general claim follows by standard scaling, translation and covering arguments.

\smallskip

Following \cite{ELM99}, we introduce two small parameters $\sigma,\e\in(0,1)$. Parameter $\sigma>0$ is related to a perturbation $f_\sigma$ of the integrand $f$
\begin{equation}\label{def:fsigma}
f_\sigma(\xi):=f(\xi)+\sigma|\xi|^q\qquad\mbox{for every $\xi\in\R^n$}.
\end{equation}
Since $f$ satisfies \eqref{ass} and $\sigma\in(0,1)$, the function $f_\sigma$ satisfies \eqref{ass} with $M$ replaced by $M'$ depending on $M$ and $q$. The second parameter $\e>0$ corresponds to a regularization $u_\e$ of $u$, where $u_\e:=u*\varphi_\e$ with $\varphi_\e:=\e^{-n}\varphi(\frac{\cdot}\e)$ and $\varphi$ being a non-negative, radially symmetric mollifier, i.e. it satisfies
$$
\varphi\geq0,\quad {\rm supp}\; \varphi\subset B_1,\quad \int_{\R^n}\varphi(x)\,dx=1,\quad \varphi(\cdot)=\widetilde \varphi(|\cdot|)\quad \mbox{for some $\widetilde\varphi\in C^\infty(\R)$}.
$$
Given $\e,\sigma\in(0,1)$, we denote by $v_{\e,\sigma}\in u_\e+W_0^{1,q}(B_1)$ the unique function satisfying
\begin{equation}\label{eq:defvesigma}
\int_{B_1}f_\sigma(\nabla v_{\e,\sigma})\,dx\leq \int_{B_1}f_\sigma(\nabla v)\,dx\qquad\mbox{for all $v\in u_\e+W_0^{1,q}(B_1)$}.
\end{equation}
In view of Corollary~\ref{C:1b}, we have 
\begin{eqnarray}\label{est:T2b1}
\|\nabla v_{\e,\sigma}\|_{L^\infty(B_\frac{1}8)}&\stackrel{\eqref{est:C:1b1}}{\lesssim}& \left(\int_{B_{\frac{1}4}}|\nabla v_{\e,\sigma}|^p\,dx+1\right)^\frac{\alpha}p\notag\\
&\stackrel{\eqref{ass}}{\lesssim}&\left(\int_{B_{1}}f_\sigma(\nabla v_{\e,\sigma})\,dx+1\right)^\frac{\alpha}p\notag\\
&\stackrel{\eqref{def:fsigma},\eqref{eq:defvesigma}}{\leq}&\left(\int_{B_{1}}f(\nabla u_\e)+\sigma|\nabla u_\e|^q\,dx+1\right)^\frac{\alpha}p\notag\\
&\leq&\left(\int_{B_{1+\e}}f(\nabla u)\,dx+\sigma\int_{B_1}|\nabla u_\e|^q\,dx+1\right)^\frac{\alpha}p,
\end{eqnarray} 
where we used Jensen's inequality and the convexity of $f$ in the last step. Similarly,
\begin{align}\label{est:T2b2}
m\int_{B_1}|\nabla v_{\e,\sigma}|^p\,dx\stackrel{\eqref{ass}}{\leq}& \int_{B_1}f(\nabla v_{\e,\sigma})\,dx\stackrel{\eqref{def:fsigma}\eqref{eq:defvesigma}}{\leq} \int_{B_1}f(\nabla u_\e)+\sigma|\nabla u_\e|^q\,dx\notag\\
\leq&\int_{B_{1+\e}}f(\nabla u)\,dx+\sigma\int_{B_1}|\nabla u_\e|^q\,dx.
\end{align}
Fix $\e\in(0,1)$. In view of \eqref{est:T2b1} and \eqref{est:T2b2}, we find $w_\e\in u_\e+W_0^{1,p}(B_1)$ such that as $\sigma\to0$, up to subsequence, 
\begin{align*}
v_{\e,\sigma}\rightharpoonup w_\e\qquad\mbox{weakly in $W^{1,p}(B_1)$},\\
\nabla v_{\e,\sigma}\stackrel{*}{\rightharpoonup} \nabla w_\e\qquad\mbox{weakly$^*$ in $L^\infty(B_\frac{1}8)$}.
\end{align*}
Hence, a combination of \eqref{est:T2b1}, \eqref{est:T2b2} with the weak/weak$^*$ lower-semicontinuity of convex functionals yield
\begin{align}
 \|\nabla w_\e\|_{L^\infty(B_\frac{R}8)}\leq&\liminf_{\sigma\to0}\|\nabla v_{\e,\sigma}\|_{L^\infty(B_\frac{1}8)}\lesssim\left(\int_{B_{1+\e}}f(\nabla u)\,dx+1\right)^\frac{\alpha}p\label{est:T2b3}\\
m\int_{B_1}|\nabla w_\e|^p\,dx\leq&\int_{B_{1}}f(\nabla w_\e)\,dx\leq\int_{B_{1+\e}}f(\nabla u)\,dx.\label{est:T2b4}
\end{align}
Since $w_\e\in u_\e+W_0^{1,q}(B_1)$ and $u_\e\to u$ in $W^{1,p}(B_1)$, we find by \eqref{est:T2b4} a function  $w\in u+W_0^{1,p}(B_1)$ such that, up to subsequence,
\begin{equation*}
\nabla w_{\e}\rightharpoonup \nabla w\quad\mbox{weakly in $L^p(B_1)$}.
\end{equation*}
Appealing to the bounds \eqref{est:T2b3}, \eqref{est:T2b4} and lower semicontinuity, we obtain
\begin{align}
 \|\nabla w\|_{L^\infty(B_\frac{1}8)}\lesssim&\left(\int_{B_{1+\e}}f(\nabla u)\,dx+1\right)^\frac{\alpha}p\label{est:T2b5}\\
\int_{B_{1}}f(\nabla w)\,dx\leq&\int_{B_{1}}f(\nabla u)\,dx.\label{est:T2b6}
\end{align}
Inequality \eqref{est:T2b6}, the strong convexity of $f$ and the fact $w\in u+W_0^{1,p}(B_1)$ imply $w=u$ and thus the claimed estimate \eqref{est:reduceT2b} is a consequence of \eqref{est:T2b5}.
\end{proof}




\begin{thebibliography}{99}
\bibitem{BCM18} P.\ Baroni, M.\ Colombo and G.\ Mingione, Regularity for general functionals with double phase, \textit{Calc.\ Var.\ Partial Differential Equations} {\bf 57} (2018), no.~2, Art.~62.
\bibitem{BM18} L.\ Beck and G.\ Mingione, Optimal Lipschitz criteria and local estimates for non-uniformly elliptic problems, Preprint (2018), available at https://arxiv.org/abs/1806.03326.
\bibitem{BS19a} P.\ Bella and M.\ Sch\"affner, Local boundedness and Harnack inequality for solutions of linear non-uniformly elliptic equations. Preprint (2019). Available at \textit{https://arxiv.org/abs/1901.07958}.
\bibitem{BS19b} P.\ Bella and M.\ Sch\"affner, Quenched invariance principle for random walks among random degenerate conductances, To appear in \textit{Ann.\ Probab.} Available at \textit{https://arxiv.org/abs/1902.05793}
\bibitem{CM15} M.\ Colombo and G.\ Mingione, Regularity for double phase variational problems, \textit{Arch.\ Ration.\ Mech.\ Anal.} {\bf 215} (2015), no.~2, 443--496.
\bibitem{CMM15} G. Cupini, P. Marcellini and E. Mascolo, Local boundedness of minimizers with limit growth conditions, \textit{J.\ Optimization Th. Appl.} {\bf 166} (2015), 1--22.
\bibitem{EMM19} M.\ Eleuteri, P.\ Marcellini and E.\ Mascolo, Regularity for scalar integrals without structure conditions, To appear \textit{Adv.\ Calc.\ Car.} Available at https://doi.org/10.1515/acv-2017-0037.
\bibitem{ELM99} L.\ Esposito, F.\ Leonetti and G.\ Mingione, Higher integrability for minimizers of integral functionals with $(p,q)$ growth, \textit{J.\ Differential Equations} {\bf 157} (1999), no.~2, 414--438.
\bibitem{ELM04} L.\ Esposito, F.\ Leonetti and G.\ Mingione, Sharp regularity for functionals with $(p,q)$ growth, \textit{J.\ Differential Equations} {\bf 204} (2004), no.~1, 5--55.
\bibitem{FS93} N.\ Fusco and C.\ Sbordone, Some remarks on the regularity of minima of anisotropic integrals, \textit{Commun.\ Partial Diff.\ Equ.} {\bf 18} (1993), 153--167.
\bibitem{FSS98} B.\ Franchi, R.\ Serapioni and F.\ Serra Cassano, Irregular solutions of linear degenerate elliptic equations, \textit{Potential Anal.} {\bf 9} (1998), no.~3, 201--216.
\bibitem{G87} M.\ Giaquinta, Growth conditions and regularity, a counterexample, \textit{Manuscripta Math.} {\bf 59} (1987), no.~2, 245--248. 
\bibitem{GT} D.\ Gilbarg and N.\ Trudinger, \textit{Elliptic partial differential equations of second order}, Springer, 1998. 
\bibitem{Giu} E.\ Giusti, \textit{Direct methods in the calculus of variations,} World Scientific Publishing Co., Inc., River Edge, NJ, 2003. viii+403 pp
\bibitem{H92} M.\ C.\ Hong, Some remarks on the minimizers of variational integrals with nonstandard growth conditions, \textit{Boll. Un. Mat. Ital. A (7)} {\bf 6} (1992), no.~1, 91--101. 
\bibitem{L93} G. M.\ Lieberman, The natural generalization of the natural conditions of Ladyzhenskaya and Uraltseva for elliptic equations, \textit{Commun.\ Partial Differ.\ Equations} {\bf 16} (1991), 311--361.
\bibitem{Mar89} P.\ Marcellini, Regularity of minimizers of integrals of the calculus of variations with nonstandard growth conditions, \textit{Arch.\ Rational Mech.\ Anal.} {\bf 105} (1989), no.~3, 267--284. 
\bibitem{Mar91} P.\ Marcellini, Regularity and existence of solutions of elliptic equations with $p,q$-growth conditions, \textit{J.\ Differential Equations} {\bf 90} (1991), no.~1, 1--30. 
\bibitem{Min06} G.\ Mingione, Regularity of minima: an invitation to the dark side of the calculus of variations, \textit{Appl.\ Math.} {\bf 51} (2006), no.~4,  355--426.
\bibitem{Moser60} J.\ Moser,  A new proof of De Giorgi's theorem concerning the regularity problem for elliptic differential equations, \textit{Comm.\ Pure Appl.\ Math.} {\bf 13} (1960), 457--468 .
\bibitem{MS68} M.\ K.\ V.\ Murthy and G.\ Stampacchia, Boundary value problems for some degenerate-elliptic operators, \textit{Ann.\ Mat.\ Pura Appl.\ (4)} {\bf 80} (1968), 1--122. 
\bibitem{T71} N.\ S.\ Trudinger, On the regularity of generalized solutions of linear, non-uniformly elliptic equations, \textit{Arch.\ Rational Mech.\ Anal.} {\bf 42} (1971), 50--62.

\end{thebibliography}
\end{document}